\newtheorem{thm}{Theorem}[section]
\newtheorem{prop}[thm]{Proposition}
\newtheorem{lem}[thm]{Lemma}
\newtheorem{cor}[thm]{Corollary}
\theoremstyle{definition}
\newtheorem{example}[thm]{Example}
\newtheorem{remark}[thm]{Remark}
\numberwithin{equation}{section}
\newcommand{\wt}{\widetilde}
\newcommand{\GL}{\operatorname{GL}}
\newcommand{\PGL}{\operatorname{PGL}}
\newcommand{\Aut}{\operatorname{Aut}}
\newcommand{\Spec}{\operatorname{Spec}}
\newcommand{\id}{\operatorname{id}}
\newcommand{\Pic}{\operatorname{Pic}}
\newcommand{\im}{\operatorname{im}}
\newcommand{\Hom}{\operatorname{Hom}}
\newcommand{\Gal}{\operatorname{Gal}}
\newcommand{\Dic}{\operatorname{Dic}}
\newcommand{\beq}{\begin{equation}}
\newcommand{\eeq}{\end{equation}}
\begin{document}

\title[Groups acting on cubic surfaces in characteristic zero]{Groups acting on Cubic Surfaces \\ in Characteristic Zero}
\author{Jonathan M. Smith}

\abstract{}
For every field $k$ of characteristic zero, we determine the groups that act as automorphisms on a smooth cubic surface over $k$. We also determine the groups that act on $k$-rational, stably $k$-rational, or $k$-unirational smooth cubic surfaces.
\endabstract{}

\maketitle

\section{Introduction}

The purpose of this paper is to determine, for each field $k$ of characteristic zero, the groups that act by automorphisms on smooth cubic surfaces over $k$. We also determine the groups that act on a $k$-rational, stably $k$-rational, or $k$-unirational cubic surface. This is progress toward the goal of classifying the finite subgroups of the plane Cremona group over an arbitrary field of characteristic zero.

The first attempts at a classification of the automorphism groups of cubic surfaces over an algebraically closed field of characteristic zero are due to S. Kantor \cite{Kan95}, A. Wiman \cite{Wim96}, and B. Segre \cite{Seg42}, but the first complete classification is due to T. Hosoh \cite{Hos97}. More generally, the automorphism groups of del Pezzo surfaces of any degree over an algebraically closed field of characteristic zero were computed by I. Dolgachev and V. Iskovskikh in \cite{DolIsk09}. A classification of the automorphism groups of del Pezzo surfaces over an algebraically closed field of any characteristic was obtained more recently by I. Dolgachev, A. Duncan, and G. Martin in \cite{DolDun18, DolMar22, DolMar23}. 

However, the classification over non-algebraically closed fields is still largely open. The automorphisms of real del Pezzo surfaces were studied by E. Yasinsky in \cite{Yas19}. The automorphisms of quintic del Pezzo surfaces over any perfect field were classified independently by A. Boitrel \cite{Boi23} and A. Zaitsev \cite{Zai23}. The largest automorphism groups of smooth cubic surfaces over finite fields of characteristic 2 were determined by A. Vikulova in \cite{Vik23}. Lastly, the author determined the maximal automorphism groups of quartic del Pezzo surfaces over any field of characteristic zero in \cite{Smi23}. This paper is fundamentally an extension of the results in \cite{Smi23} to smooth cubic surfaces. 

For a smooth cubic surface $X$ over any field $k$, the action of $\Aut(X)$ on the 27 lines of $X_{\bar{k}}$ yields an injective group homomorphism $\Aut(X) \xhookrightarrow{} W(\mathsf{E}_6)$ into the Weyl group of the root system $\mathsf{E}_6$ (see Chapter 25 of \cite{Man86}). This map identifies the automorphism group of a cubic surface with the conjugacy class of a subgroup of $W(\mathsf{E}_6)$. This identification has at least two benefits: (1) it enables us to compare automorphisms of cubic surfaces across various fields, and (2) the action of $\Aut(X)$ on the lines of $X_{\bar{k}}$ is useful when analyzing the rationality of $X$. 

We say a subgroup $G$ of $W(\mathsf{E}_6)$ \textit{acts by automorphisms} on a smooth cubic surface $X$ if $G$ is contained in the image of the map $\Aut(X) \xhookrightarrow{} W(\mathsf{E}_6)$. This notion is well-defined up to conjugacy in $W(\mathsf{E}_6)$. For any field $k$, let $\mathcal{P}_{3,k}$ be the collection of conjugacy classes of subgroups of $W(\mathsf{E}_6)$, partially ordered by inclusion, that act by automorphisms on some smooth cubic surface over $k$. We let $\epsilon_n$ denote a primitive $n$th root of unity. Our first main result completely describes $\mathcal{P}_{3,k}$ for any field $k$ of characteristic zero.

\begin{thm}
\label{thm:main}
Let $k$ be a field of characteristic zero. If $G$ is a maximal group in $\mathcal{P}_{3,k}$, then $G$ is one of the groups in the table. Each group appears in $\mathcal{P}_{3,k}$ if and only if the condition in the third column is satisfied. When the condition in the third column is satisfied, the fourth column provides a smooth cubic surface on which the group acts.
\begin{center}
    \begin{tabular}{|c|c|c|c|}
        \hline
        \textbf{Name} & \textbf{Structure} & \textbf{Condition on } $k$ & \textbf{Surface} \\
        \hline
        $5\mathsf{A}$ & $S_5$ & none & Eq. \ref{eq:clebsch} \\
        \hline
        $3\mathsf{C}$ & $C_3^3 \rtimes S_4$ & $\epsilon_3 \in k$ & Eq. \ref{eq:fermat} \\
        \hline
        $12\mathsf{A}$ & $\mathcal{H}_3(3) \rtimes C_4$ & $\epsilon_{12} \in k$ & Eq. \ref{eq:H33C4} \\
        \hline
        $8\mathsf{A}$ & $C_8$ & $\epsilon_8 \in k$ & Eq. \ref{eq:8A} \\
        \hline
        $4\mathsf{A}$ & $C_4$ & $i \in k$ & Eq. \ref{eq:8A} \\
        \hline
        $3\mathsf{C}_1$ & $C_3^2 \rtimes D_8$ & none & Eq. \ref{eq:C32D8} \\
        \hline
        $3\mathsf{C}_2$ & $\Dic_{12}$ & $x^2 + y^2 = -3$ has a solution over $k$ & Eq. \ref{eq:C3C4} \\
        \hline
    \end{tabular}
    \end{center}
Moreover, each group in the table is maximal in $\mathcal{P}_{3,k}$ for some choice of $k$.
\end{thm}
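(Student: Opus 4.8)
The plan, following the strategy used for quartic del Pezzo surfaces in \cite{Smi23}, is to combine the classification of automorphism groups of smooth cubic surfaces over $\bar k$ with a descent analysis carried out one $W(\mathsf E_6)$-conjugacy class at a time. By Hosoh's classification \cite{Hos97} (see also \cite{DolIsk09}), over $\bar k$ the automorphism group of a smooth cubic surface is one of finitely many explicit conjugacy classes of subgroups of $W(\mathsf E_6)$, whose maximal members are, up to conjugacy, $C_3^3 \rtimes S_4$ (the Fermat cubic, type $3\mathsf C$), $S_5$ (the Clebsch cubic, type $5\mathsf A$), $\mathcal H_3(3) \rtimes C_4$ (type $12\mathsf A$), and $C_8$ (type $8\mathsf A$), none of which is $W(\mathsf E_6)$-conjugate into another. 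Since $G \in \mathcal P_{3,k}$ and $G' \le G$ force $G' \in \mathcal P_{3,k}$ (the subgroup $G'$ acts on the very same cubic surface), every $G \in \mathcal P_{3,k}$ is, up to conjugacy, a subgroup of one of these four groups. It therefore suffices to determine, for each conjugacy class of subgroup $G$ of $3\mathsf C$, $5\mathsf A$, $12\mathsf A$, or $8\mathsf A$, the set of characteristic-zero fields $k$ for which $G \in \mathcal P_{3,k}$, and then read off the maximal classes.

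The key reduction is a descent criterion: a conjugacy class $G \le W(\mathsf E_6)$ lies in $\mathcal P_{3,k}$ exactly when there is a finite subgroup $\tilde G \le \GL_4(\bar k)$ lifting $G$ with the correct image in $W(\mathsf E_6)$, such that the $4$-dimensional representation $\tilde G \hookrightarrow \GL_4(\bar k)$ is definable over $k$ and some $\tilde G$-semi-invariant cubic form with $k$-rational coefficients defines a smooth surface. Once the representation is definable over $k$, the existence of a smooth $k$-rational invariant cubic is equivalent to geometric realizability, since $k$ is infinite and the smooth invariant cubics form an open $k$-subvariety of a projective space with dense rational points; thus the whole question collapses to whether the relevant linear action of $G$ descends to $k$. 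This is a condition on character values together with, for non-abelian $G$, a Schur-index (Brauer) obstruction: the diagonal $C_3^3$ of the Fermat cubic needs $\epsilon_3$; the cyclic pieces of orders $12$, $8$, $4$ need $\epsilon_{12}$, $\epsilon_8$, $i$ respectively; and for $\Dic_{12}$ (type $3\mathsf C_2$) the obstruction is the splitting of a quaternion algebra, equivalently the solvability of $x^2 + y^2 = -3$, equivalently a $k$-point on the conic $x^2 + y^2 + 3z^2 = 0$. The surfaces in the fourth column are the explicit witnesses for the sufficiency direction, and for each I would check that, under the stated hypothesis on $k$, its equation defines a smooth cubic surface over $k$ carrying the claimed $G$-action of the correct $W(\mathsf E_6)$-conjugacy class.

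For exhaustiveness I would traverse the subgroup lattice of each of $3\mathsf C$, $5\mathsf A$, $12\mathsf A$, $8\mathsf A$, compute the minimal field(s) of realization of every conjugacy class of subgroup via the criterion above, and verify that every class realized over a given $k$ is contained in one of the seven tabulated entries whose condition is satisfied over $k$; the presence of exactly three ``$C_3$-type'' entries ($3\mathsf C$, $3\mathsf C_1$, $3\mathsf C_2$) reflects that the relevant order-$3$ classes of $W(\mathsf E_6)$ have different character data and hence different descent conditions. For the final (``moreover'') clause I would exhibit, for each entry $G$, a field $k$ over which $G$ is realized while no tabulated class properly containing $G$ has its condition met: $k = \mathbb Q$ shows $5\mathsf A$ and $3\mathsf C_1$ are maximal; $k = \mathbb Q(\epsilon_3)$ shows $3\mathsf C$ is maximal (since $\epsilon_{12} \notin \mathbb Q(\epsilon_3)$); and suitable cyclotomic fields, together with fields in which $-3$ is a sum of two squares but $\epsilon_3 \notin k$, handle $12\mathsf A$, $8\mathsf A$, $4\mathsf A$, and $3\mathsf C_2$.

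The main difficulty is this last step: one must pin down the conjugacy classes of subgroups of $W(\mathsf E_6)$ occurring inside $3\mathsf C$, $5\mathsf A$, $12\mathsf A$, $8\mathsf A$ — in particular the several classes of order-$3$, order-$4$ and order-$8$ elements and the subgroups they generate — and prove that each subgroup-realization problem really does collapse to exactly the clean arithmetic condition of the table, with no residual cohomological obstruction, all while tracking genuine $W(\mathsf E_6)$-conjugacy rather than abstract isomorphism of groups, so that (for instance) the three $3\mathsf C$-type entries, and the $4\mathsf A$ inside $8\mathsf A$ as opposed to the order-$4$ subgroups of $3\mathsf C_1$ and $3\mathsf C_2$, are never conflated.
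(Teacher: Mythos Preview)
Your overall architecture matches the paper's: reduce to the four maximal $\bar k$-groups $5\mathsf A$, $3\mathsf C$, $12\mathsf A$, $8\mathsf A$ and analyze descent for each conjugacy class of subgroup. The paper executes this via Propositions \ref{prop:5A}--\ref{prop:3C}, using rational canonical forms of arbitrary scalar lifts (Lemma \ref{lem:elements}) and explicit Galois cocycles on twists of the Fermat cubic (Proposition \ref{prop:C3C4}) rather than your Schur-index language. Your quaternion-algebra reading of the $3\mathsf C_2$ condition is a valid alternative for the sufficiency direction, and indeed Example \ref{ex:C3C4} is nothing other than an explicit splitting of $(-1,-3)_k$.

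There are, however, two genuine gaps. First, your descent criterion asserts that $G\in\mathcal P_{3,k}$ forces some linear lift $\tilde G\hookrightarrow\GL_4(k)$, after which character values and Schur indices constrain $k$; but acting on a cubic over $k$ only yields $G\subset\PGL_4(k)$, and the passage to $\GL_4(k)$ is obstructed by a class in $H^2(G,k^\times)$ that you never address. The paper avoids this by working one element at a time and computing the rational canonical form of an \emph{arbitrary} scalar lift $aM\in\GL_4(\bar k)$, which extracts the needed root of unity directly from $\PGL_4(k)$ without ever lifting the whole group. Second, and more structurally, the classification inside $3\mathsf C\cong C_3^3\rtimes S_4$ when $\epsilon_3\notin k$ (Proposition \ref{prop:3C}) is not a Schur-index problem at all: one must show that every realizable $G$ is $W(\mathsf E_6)$-conjugate into $3\mathsf C_1$, $3\mathsf C_2$, or $4\mathsf B$, and this hinges on controlling the splittings $\rho(G)\hookrightarrow C_3^3\rtimes\rho(G)$ up to $C_3^3$-conjugacy via the vanishing $H^1(H,C_3^3)=0$ of Lemma \ref{lem:splittings}, together with a direct cocycle computation to exclude $\langle[1,2,1,2],(1234),(13)\rangle$. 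These steps are about $W(\mathsf E_6)$-conjugacy of subgroups rather than definability of a fixed representation, and your plan to ``traverse the subgroup lattice and compute Schur data'' supplies no mechanism for them.
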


\begin{remark}
    By construction, if $G$ is in $\mathcal{P}_{3,k}$ and $H$ is a subgroup of $G$, then $H$ is in $\mathcal{P}_{3,k}$. Therefore, $\mathcal{P}_{3,k}$ is the downward closure of the groups in the table that are realized over $k$. The fact that each group in the table is maximal for some $k$ indicates that the list of groups is as small as possible. The groups $5\mathsf{A}$, $3\mathsf{C}$, $12\mathsf{A}$, and $8\mathsf{A}$ from the table are the well known maximal automorphism groups of cubic surfaces over $k = \bar{k}$ (cf. Table 9.6 of \cite{Dol12}). The group $4\mathsf{A}$ is not maximal when $k = \bar{k}$, but it is maximal over, for example, $\mathbb{Q}(i)$. The groups $3\mathsf{C}_1$ and $3\mathsf{C}_2$ are subgroups of $3\mathsf{C}$ that are realized on $k$-forms of the Fermat cubic surface.
\end{remark}

Our second main result determines when a subgroup of $W(\mathsf{E}_6)$ acts on a smooth cubic surface over $k$ and yet does \textit{not} act on a $k$-rational or stably $k$-rational smooth cubic surface. There is only one group that exhibits this phenomenon.

\begin{thm}
\label{thm:rationality}
Let $k$ be a field of characteristic zero. Let $G$ be a subgroup of $W(\mathsf{E}_6)$ that acts by automorphisms on a smooth cubic surface over $k$. Then $G$ acts by automorphisms on a $k$-rational smooth cubic surface unless all three of the following conditions are satisfied:
\begin{itemize}
    \item[(i)] $G$ is conjugate to $3\mathsf{C}_2$ in $W(\mathsf{E}_6)$.
    \item[(ii)] $k$ does not contain $\epsilon_3$.
    \item[(iii)] $x^2 + y^2 = -3$ has a solution over $k$.
\end{itemize}
If these three conditions are satisfied, then $G$ does not act on a $k$-rational or stably $k$-rational smooth cubic surface.
\end{thm}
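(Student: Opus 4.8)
The plan is to establish the two halves of Theorem~\ref{thm:rationality} by different means: a case analysis for the positive assertion, and a cohomological obstruction for the negative one.

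\emph{Positive direction.} By the Remark following Theorem~\ref{thm:main}, $\mathcal{P}_{3,k}$ is the downward closure of the maximal groups that are realized over $k$. Hence it is enough to show that each maximal group other than $3\mathsf{C}_2$—together with $3\mathsf{C}_2$ itself when $\epsilon_3\in k$, and with every proper subgroup of $3\mathsf{C}_2$—acts by automorphisms on a $k$-rational smooth cubic surface. For this I would use the classical criterion that a smooth cubic surface $X/k$ is $k$-rational as soon as $X(k)\neq\varnothing$ and $X$ carries a Galois-stable set of pairwise skew lines whose contraction is a del Pezzo surface of degree $\geq 5$. For $3\mathsf{C}$, $12\mathsf{A}$, $8\mathsf{A}$, $4\mathsf{A}$ the hypothesis on $k$ forces enough of the twenty-seven lines to be $k$-rational that the surfaces of Theorem~\ref{thm:main} become $k$-split (or at least admit such a configuration), and they carry obvious $k$-points; in particular, when $\epsilon_3\in k$ the Fermat cubic is $k$-split, which simultaneously handles $3\mathsf{C}_2\subset 3\mathsf{C}$. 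For $5\mathsf{A}$ one notes that the Clebsch cubic contains the skew $k$-rational lines $\{(s:{-s}:t:{-t}:0)\}$ and $\{(0:s:{-s}:t:{-t})\}$ and a $k$-point, so it is $k$-rational over every $k$. For $3\mathsf{C}_1$, and for the proper subgroups of $3\mathsf{C}_2=\Dic_{12}$—namely $C_6$, $C_4$, $C_3$ and $C_2$—one exhibits explicit $k$-forms of the Fermat cubic (or of appropriate sub-surfaces) carrying the prescribed action together with a Galois-stable contractible configuration of lines; here the content is to check that none of these groups forces $k$-minimality, and that each proper subgroup of $\Dic_{12}$ is conjugate in $W(\mathsf{E}_6)$ into a maximal group other than $3\mathsf{C}_2$ (so that the rational witness constructed for that group applies).

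\emph{Negative direction.} Assume (i)--(iii) and let $X/k$ be \emph{any} smooth cubic surface carrying an action of a group $G_0\subset\Aut(X)$ whose image in $W(\mathsf{E}_6)$ lies in the class $3\mathsf{C}_2$. Two standard facts reduce the problem to group cohomology. First, if $X$ were stably $k$-rational then $X(k)\neq\varnothing$ (project from $X\times\mathbb{P}^n$), and since $\operatorname{Br}$ is a stable birational invariant of smooth proper $k$-varieties with $\operatorname{Br}(\mathbb{P}^n_k)=\operatorname{Br}(k)$ we would get $\operatorname{Br}(X)=\operatorname{Br}(k)$; but for a geometrically rational surface with a $k$-point one has $\operatorname{Br}(X)/\operatorname{Br}(k)\cong H^1(k,\Pic(\bar X))$, so it suffices to show $H^1(k,\Pic(\bar X))\neq 0$. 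Second, because $G_0$ is defined over $k$ its action on $\Pic(\bar X)$ commutes with the Galois action, so the image $\Gamma$ of $\Gal(\bar k/k)\to W(\mathsf{E}_6)$ is contained in the centralizer $C:=C_{W(\mathsf{E}_6)}(3\mathsf{C}_2)$, and (as $\Pic(\bar X)$ is torsion-free) $H^1(k,\Pic(\bar X))=H^1(\Gamma,\Pic(\bar X))$ depends only on $\Gamma\subseteq C$. Condition (iii) plays the role of an \emph{existence} hypothesis: it is equivalent to the splitting of the quaternion algebra $(-1,-3)$ over $k$, hence to the realizability over $k$ of the faithful two-dimensional representation of $\Dic_{12}$, hence to $\Dic_{12}\hookrightarrow\PGL_4(k)$—which is exactly what makes such $X$ exist at all. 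Condition (ii) is what forces non-rationality: since $\epsilon_3\notin k$ and a split $k$-form of the Fermat cubic exists only when $\epsilon_3\in k$, the Galois image $\Gamma$ is a nontrivial subgroup of $C$ containing a distinguished involution $\tau$, the image of $\Gal(k(\epsilon_3)/k)$. One then checks that $H^1(\Gamma,\Pic(\bar X))\neq 0$ for every admissible $\Gamma$: the involution $\tau$ acts on $\Pic(\bar X)$ with a rank-four anti-invariant sublattice, and a direct Tate-cohomology computation shows this produces nonzero $H^1$ that is not killed on passing to any larger admissible $\Gamma$ inside $C$. Therefore every such $X$ fails to be stably $k$-rational. (If instead (ii) fails, the positive direction supplies a rational witness; and if (iii) fails, $3\mathsf{C}_2\notin\mathcal{P}_{3,k}$, so there is nothing to prove.)

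I expect the main obstacle to be the group-theoretic core of the negative direction: determining $C=C_{W(\mathsf{E}_6)}(3\mathsf{C}_2)$ and its subgroup lattice, identifying the involution $\tau$ as a product of four orthogonal reflections, matching the elementary condition ``$x^2+y^2=-3$ solvable over $k$'' with the triviality of the Brauer class $(-1,-3)\in\operatorname{Br}(k)$, and—most delicately—verifying $H^1(\Gamma,\Pic(\bar X))\neq 0$ \emph{uniformly} over all admissible $\Gamma$, since the statement concerns every cubic surface with a $3\mathsf{C}_2$-action and not merely the one exhibited in Theorem~\ref{thm:main}. A secondary difficulty is the conjugacy-class bookkeeping in the positive direction, in particular placing the cyclic groups that occur inside $\Dic_{12}$ (and the groups $4\mathsf{A}$, $8\mathsf{A}$, $12\mathsf{A}$) on surfaces that are $k$-rational under the relevant hypothesis on $k$.
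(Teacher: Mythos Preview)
Your positive direction follows the same case-by-case shape as the paper, but the mechanism you propose for $8\mathsf{A}$, $12\mathsf{A}$, and $4\mathsf{A}$ is not what the paper does and is not obviously correct. You assert that under the relevant root-of-unity hypothesis the surfaces from Theorem~\ref{thm:main} ``become $k$-split (or at least admit such a configuration)''; the paper makes no such claim. For $8\mathsf{A}$ and $12\mathsf{A}$ it instead invokes Proposition~\ref{prop:centralizer}: once $\Aut(X)=\Aut(\overline{X})$ and the centralizer of $\Aut(X)$ in $W(\mathsf{E}_6)$ is abelian and contained in $\Aut(X)$, one twists $X$ by the cocycle $c_\gamma=\bar\gamma^{-1}$ to obtain a \emph{different} $k$-form on which the Galois action on lines is trivial. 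For $4\mathsf{A}$ the paper picks a specific member ($\alpha=9$) of a one-parameter family and exhibits two explicit skew $k$-lines; for $3\mathsf{C}_1$ it uses the twist of Remark~\ref{rem:C32D8} and names two $\Gamma$-fixed skew lines. Your separate treatment of proper subgroups of $3\mathsf{C}_2$ is unnecessary: each is conjugate into $3\mathsf{C}_1$ or $4\mathsf{B}\subset 5\mathsf{A}$, so the maximal-group witnesses already cover them.

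Your negative direction is a genuinely different route. The paper does not compute $H^1(\Gamma,\Pic\overline{X})$ or invoke the Brauer group at all. Instead, Proposition~\ref{prop:C3C4} already pins down the cocycle: any $k$-form carrying $3\mathsf{C}_2$ is a twist of the Fermat cubic by a surjection $c:\Gamma\to\langle(1234)\rangle$ of the prescribed shape. Proposition~\ref{prop:Dic12rationality} then lists the $\Gamma$-orbits of the $27$ lines under the twisted action, observes that the unique skew orbit is the single $\Dic_{12}$-fixed line $L_{1,0,0}$, blows it down to a quartic del Pezzo surface with a $\Dic_{12}$-action, and cites the author's earlier paper \cite{Smi23} for the non-(stable-)rationality of that quartic. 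Your $H^1$ approach could be made to work, but the ``main obstacle'' you flag is precisely where the paper's argument has content: to determine the admissible $\Gamma$ you would in any case need the cocycle analysis of Proposition~\ref{prop:C3C4}, after which the paper's blow-down-and-cite is shorter than a uniform $H^1$ computation over all subgroups of $C_{W(\mathsf{E}_6)}(3\mathsf{C}_2)$.
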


\begin{remark}
There are exactly two groups, under the same field conditions, that act on a quartic del Pezzo surface and yet do not act on a $k$-rational or stably $k$-rational surface \cite{Smi23}. Nonetheless, for any field $k$ of characteristic zero, any group that acts by automorphisms on a smooth cubic surface over $k$ must act by automorphisms on a $k$-unirational cubic surface (see Corollary \ref{cor:unirational}). This extends a similar result obtained in \cite{Smi23} for quartic del Pezzo surfaces.
\end{remark} 

The paper is structured as follows. Section 2 provides useful background information on del Pezzo surfaces and the already known classification of automorphisms of cubic surfaces over algebraically closed fields. Section 3 is the heart of the paper and devoted to proving Theorem \ref{thm:main}. Section 4 addresses the rationality of surfaces exhibiting various group actions, culminating in the proof of Theorem \ref{thm:rationality}.

\subsection*{Acknowledgments} The author would like to thank Alexander Duncan for suggesting this problem and offering helpful comments. This work was partially supported by a SPARC Graduate Research Grant from the Office of the Vice President for Research at the University of South Carolina.

\section{Preliminaries}

Unless stated otherwise, $k$ will be a field of characteristic zero. If $X$ is a variety over $k$, we let $\overline{X}$ denote $X \times \Spec \bar{k}$. Then $\overline{X}$ has an action of $\Gal(\bar{k}/k)$ induced by the action on the second factor. If $X$ and $Y$ are varieties over $k$, we say that a rational map $X \dashrightarrow Y$ is defined over $k$ if the corresponding map $\overline{X} \dashrightarrow \overline{Y}$ is $\Gal(\bar{k}/k)$-equivariant. We let $\Aut(X)$ denote the automorphisms of $X$ defined over $k$, while $\Aut(\overline{X})$ denotes the automorphisms defined over $\bar{k}$. A surface $X$ is \textit{$k$-rational} if there exists a birational map defined over $k$ from $X$ to $\mathbb{P}^2_k$. A surface $X$ is \textit{stably $k$-rational} if $\mathbb{P}^n_k \times X$ is $k$-rational for some $n \geq 0$. A surface $X$ is \textit{$k$-unirational} if there is a dominant rational map $\mathbb{P}^n_k \dashrightarrow X$ defined over $k$. We let $X(k)$ denote the set of $k$-rational points on $X$. 

\subsection{Group theoretic notation} Throughout, we adopt the following conventions:
\begin{itemize}
    \item $C_n$ and $D_n$ denote the cyclic group and dihedral group of order $n$ respectively.
    \item $S_n$ is the symmetric group acting on $n$ elements.
    \item $C_n^m$ is the direct sum of $m$ copies of $C_n$.
    \item $\Dic_{n}$ denotes the dicyclic group of order $n$.
    \item $\mathcal{H}_n(p)$ is the Heisenberg group of upper triangular $n \times n$ matrices with entries in $\mathbb{F}_p$ where each diagonal entry is 1.
    \item $A \rtimes B$ denotes a semidirect product of $A$ and $B$.
    \item $W(R)$ denotes the Weyl group of a root system $R$.
    \item $\epsilon_n$ denotes a primitive $n$th root of unity.
\end{itemize}

\subsection{Del Pezzo surfaces} 

Recall that a \textit{del Pezzo surface} $X$ is a smooth projective surface on which the anticanonical bundle $\omega_{X}^{-1}$ is ample. The \textit{degree} of a del Pezzo surface is defined to be $d = (\omega_{X}^{-1}, \omega_{X}^{-1})$ where $(\, , \, )$ denotes the intersection pairing on $\Pic X$. If $d \geq 3$, then the sheaf $\omega_{X}^{-1}$ is very ample, and the sections of $\omega_{X}^{-1}$ embed $X$ into a projective space of dimension $d$. Under the anticanonical embedding, del Pezzo surfaces of degree 3 are identified with smooth cubic surfaces in $\mathbb{P}^3$. The reader can consult \cite{Dem80}, \cite{Man86}, or \cite{Dol12} for a more complete exposition of del Pezzo surfaces.

Let $k$ be a field of characteristic zero, and let $X$ be a del Pezzo surface of degree $d$ over $k$. If $\overline{X}$ is not isomorphic to $\mathbb{P}^1 \times \mathbb{P}^1$, then $\overline{X}$ can be obtained as a blow up $\pi: \overline{X} \to \mathbb{P}^2$ of $r = 9-d$ points $\{P_1,...,P_r\}$ in general position. Consequently, $\Pic \overline{X} \cong \mathbb{Z}^{r+1}$, and $\Pic \overline{X}$ is generated by $\{\pi^{-1} H, E_1, ..., E_{r}\}$ where $H$ is a line on $\mathbb{P}^2$ and $E_i$ is the exceptional divisor corresponding to $P_i$. Then $K_X = -3H + E_1 + ... + E_{r}$ is a canonical divisor on $\overline{X}$, and the intersection pairing on $\Pic \overline{X}$ is determined by the rules
\begin{equation*}
    (H,H) = 1, \quad (H,E_i) = 0 \text{ and } (E_i,E_i) = -1 \text{ for $1 \leq i \leq r$}, \quad (E_i,E_j) = 0 \text{ for $i \neq j$}.
\end{equation*}
Following \cite{Man86}, we define
\begin{equation*}
    R_r = \{l \in \Pic \overline{X} \mid (l,K_X) = 0, \, (l,l) = -2\} \text{ and } I_r = \{l \in \Pic \overline{X} \mid (l,K_X) = (l,l) = -1\}.
\end{equation*}
An irreducible curve $D$ on $\overline{X}$ is \textit{exceptional} if $D \cong \mathbb{P}^1$ and $(D,D) = -1$. The map $D \mapsto \mathcal{O}_X(D)$ is a bijection from the set of exceptional curves on $\overline{X}$ to $I_r$. When $\overline{X}$ is a cubic surface, the exceptional curves on $\overline{X}$ are precisely the 27 lines on $\overline{X}$ with respect to the anticanonical embedding.

The orthogonal complement of $K_X$ in $\mathbb{R} \otimes \Pic \overline{X}$ can be identified with a Euclidean vector space of dimension $r$, and $R_r$ forms a root system of rank $r$. The actions of both $\Gal(\bar{k}/k)$ and $\Aut(\overline{X})$ preserve the intersection pairing, and therefore permute the elements of $R_r$ and $I_r$. Up to a choice of basis, the actions induce group homomorphisms $\rho: \Gal(\bar{k}/k) \to W(R_r)$ and $\tau: \Aut(\overline{X}) \to W(R_r)$, where $W(R_r)$ denotes the Weyl group of the root system $R_r$. If we obtain maps $\rho'$ and $\tau'$ by selecting a different basis, then the images of $\rho$ and $\rho'$ (resp. $\tau$ and $\tau'$) are conjugate in $W(R_r)$. When $d \leq 5$, the map $\tau: \Aut(\overline{X}) \to W(R_r)$ is injective, so we can identify automorphism groups of del Pezzo surfaces of degree $d \leq 5$ with conjugacy classes of subgroups in $W(R_r)$. 

For cubic surfaces, $R_r$ is the root system $\mathsf{E}_6$, so automorphism groups of cubic surfaces correspond to subgroups of $W(\mathsf{E}_6)$. As stated previously, a subgroup $G$ in $W(\mathsf{E}_6)$ \textit{acts by automorphisms} on a cubic surface $X$ over $k$ if the image of $\Aut(X)$ in $W(\mathsf{E}_6)$ contains $G$ up to conjugacy. We say that $G$ is \textit{realized over $k$} if $G$ acts by automorphisms on some smooth cubic surface over $k$.

\subsection{Automorphisms over algebraically closed fields} When $k = \bar{k}$, the subgroups of $W(\mathsf{E}_6)$ that act by automorphisms on a smooth cubic surface over $k$ have been classified. See Chapter 9 of \cite{Dol12} or Section 6.5 of \cite{DolIsk09} for fields of characteristic zero, and see \cite{DolDun18} for fields of positive characteristic. The reader should refer to Table 7 and Table 8 in \cite{DolDun18} for the structure of these groups and for normal forms of the cubic surfaces on which they act.

The stratification of cubic surfaces with various geometric group actions over an algebraically closed field $k$ of characteristic zero is useful for organizing our classification. Following \cite{DolDun18}, we let $\mathcal{M}_{\text{cub}}(k)$ be the coarse moduli space of smooth cubic surfaces over $k$. The conjugacy classes of elements in $W(\mathsf{E}_6)$ are labeled as $1A, ..., 12A$ where the number in the label is the order of the element. To each conjugacy class in $W(\mathsf{E}_6)$, there is a corresponding subvariety of $\mathcal{M}_{\text{cub}}(k)$ consisting of the cubic surfaces on which that class acts by automorphisms. This process produces a stratification of $\mathcal{M}_{\text{cub}}(k)$ by conjugacy classes of $W(\mathsf{E}_6)$, which we have taken from \cite{DolDun18} for convenience.

\vspace{-5mm}
\begin{figure}[h!]
\label{figure:1}
\caption{Specialization of strata in $\mathcal{M}_{\text{cub}}$ for $k = \bar{k}$ with $\text{char}(k) = 0$.}
\begin{tikzcd}
& 1A \arrow[d] &  &  \\
& 2A \arrow[ld] \arrow[d] \arrow[rrdd] &  & \\
2B \arrow[d] \arrow[rd] & 3D \arrow[d] \arrow[ld] \arrow[rd] & &  \\
4B \arrow[d] \arrow[rd] & 6E \arrow[d] \arrow[ld] & 3A \arrow[ld] \arrow[d] & 4A \arrow[ld] \arrow[d] \\
5A & 3C & 12A & 8A                     
\end{tikzcd}
\end{figure}
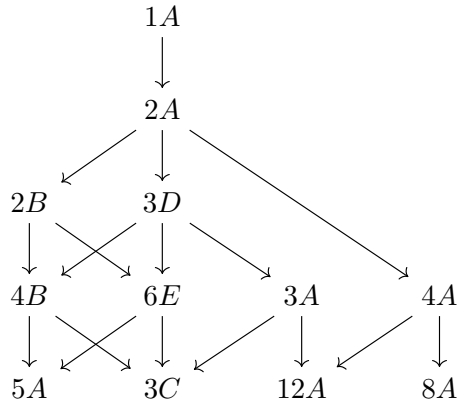

Surprisingly, the stratification of $\mathcal{M}_{\text{cub}}(k)$ by conjugacy classes of subgroups of $W(\mathsf{E}_6)$ is identical to the stratification by conjugacy classes of elements. For this reason, the symbols of Figure 1 are also used in \cite{DolDun18} to label the full automorphism groups of smooth cubic surfaces up to conjugacy in $W(\mathsf{E}_6)$. We will differentiate between the conjugacy class of the element and the conjugacy class of the associated automorphism group with a change of font. For example, $3C$ will denote the conjugacy class of the element, while $3\mathsf{C}$ will denote the conjugacy class of the corresponding automorphism group.

\section{The Classification} 

In this section, for every field $k$ of characteristic zero, we determine the subgroups of $W(\mathsf{E}_6)$ that act by automorphisms on a smooth cubic surface over $k$. From Figure 1, these subgroups must be contained in $5\mathsf{A}$, $3\mathsf{C}$, $12\mathsf{A}$, or $8\mathsf{A}$. A list of elements in $W(\mathsf{E}_6)$, up to conjugacy, that act on a smooth cubic surface $X$ over $\bar{k}$ is contained in Table 9.5 of \cite{Dol12}. We can identify $\Aut(\overline{X})$ with a subgroup of $\PGL_4(\bar{k})$ under the anticanonical embedding. Let $g$ be an element of $W(\mathsf{E}_6)$ up to conjugacy, and suppose $g$ acts by automorphisms on a smooth cubic surface over $\bar{k}$. The associated element of $\PGL_4(\bar{k})$ by which $g$ may act is determined up to a choice of coordinates in Section 9.5 of \cite{Dol12}. If $g$ acts on a smooth cubic surface over $k$, then the associated element of $\PGL_4(\bar{k})$ must be similar to an element of $\PGL_4(k)$.

\begin{lem}
\label{lem:elements}
Let $k$ be a field of characteristic zero, and let $g$ be an element of $W(\mathsf{E}_6)$ that acts by automorphisms on a smooth cubic surface over $k$.
\begin{itemize}
    \item[(i)] If $g$ is an element of type $3A$, $6A$, $6C$, or $9A$, then $\epsilon_3 \in k$.

    \item[(ii)] If $g$ is an element of type $4A$, then $i \in k$.

    \item[(iii)] If $g$ is an element of type $8A$, then $\epsilon_8 \in k$.

    \item[(iv)] If $g$ is an element of type $12A$, then $\epsilon_{12} \in k$.
\end{itemize}
\end{lem}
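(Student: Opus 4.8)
The plan is to work with the explicit projective-linear form of $g$. For each conjugacy class of $W(\mathsf{E}_6)$ that acts by automorphisms on a smooth cubic surface over $\bar{k}$, Section 9.5 of \cite{Dol12} records a distinguished representative in $\PGL_4(\bar{k})$: the class of a diagonal matrix $\operatorname{diag}(a_0,a_1,a_2,a_3)$ whose entries are roots of unity of order dividing the order of the class. As noted just before the statement, if $g$ acts on a smooth cubic surface over $k$ then this representative is similar to an element of $\PGL_4(k)$. Since $\GL_4(k)\to\PGL_4(k)$ is surjective by Hilbert's Theorem~90, I would lift to an element $\widetilde{g}\in\GL_4(k)$; over $\bar{k}$ it is conjugate to $\mu\cdot\operatorname{diag}(a_0,a_1,a_2,a_3)$ for some $\mu\in\bar{k}^\times$. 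The characteristic polynomial of $\widetilde{g}$, and that of each of its powers $\widetilde{g}^{\,m}$, lies in $k[t]$, so every eigenvalue multiset $\{\mu^m a_0^m,\dots,\mu^m a_3^m\}$ is stable under $\Gal(\bar{k}/k)$. Two consequences will be used: (a) if a value occurs in one of these multisets with a multiplicity shared by no other value, it is fixed by $\Gal(\bar{k}/k)$ and hence lies in $k$; and (b) if $g$ has four distinct eigenvalues, its fixed locus on $\mathbb{P}^3_k$ is a set of four closed points, $\Gal(\bar{k}/k)$ permutes these points, and $\sigma\in\Gal(\bar{k}/k)$ carries the differential of $g$ at a fixed point $P$ to the differential of $g$ at $\sigma(P)$; since the eigenvalues of the differential at the point of weight $a_i$ are the ratios $a_j/a_i$ for $j\neq i$, the permutation of the four fixed points must be compatible with the Galois action on these roots of unity.

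With these tools I would go through the representatives in \cite{Dol12}. When the representative has a repeated entry, (a) finishes things immediately. For type $3A$ the representative is, up to a scalar, $\operatorname{diag}(1,\epsilon_3,\epsilon_3,\epsilon_3)$, so $\mu$ (multiplicity one) and $\mu\epsilon_3$ (multiplicity three) both lie in $k$, and therefore $\epsilon_3\in k$. For type $4A$ the representative is, up to a scalar and reordering, $\operatorname{diag}(1,1,i,-1)$; here (a) places the doubled eigenvalue $\mu$ in $k$, after which $-\mu\in k$ as well, so the remaining eigenvalue $\mu i$ has no possible Galois partner, lies in $k$, and $i\in k$. For types $6A$, $6C$, and $9A$ I would not argue directly: the power maps of $W(\mathsf{E}_6)$ carry each of these classes onto $3A$ (as $(6A)^2$, $(6C)^2$, $(9A)^3$ respectively), and an element of one of these types acting on a cubic surface $X$ over $k$ has the corresponding power acting on the same $X$, reducing these cases to $3A$.

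The genuinely delicate case is $8A$ (and, in a similar way, $12A$), where the representative has four distinct eigenvalues and (a) is not directly applicable. For $8A$ the representative is, up to a scalar, $\operatorname{diag}(1,\epsilon_8,\epsilon_8^2,\epsilon_8^4)$. I would first apply (a) to $\widetilde{g}^{\,2}$, whose eigenvalue multiset $\mu^2\{1,\epsilon_8^2,\epsilon_8^4,1\}$ does have a distinguished multiplicity, obtaining $i=\epsilon_8^2\in k$ and $\mu^2\in k$; consequently $\sigma(\epsilon_8)\in\{\pm\epsilon_8\}$ for every $\sigma$, and then feeding this back into the Galois-stability of the full eigenvalue multiset of $\widetilde{g}$ — equivalently, into the compatibility in (b) between the permutation of the four fixed points and the Galois action on the ratios $\epsilon_8^{\,a_j-a_i}$ — excludes the possibility $\sigma(\epsilon_8)=-\epsilon_8$, so $\epsilon_8\in k$. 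For $12A$ one could proceed in the same way, but it is quicker to note that $(12A)^4$ is an order-$3$ class forcing $\epsilon_3\in k$ and $(12A)^3$ is an order-$4$ class forcing $i\in k$ — both by the power maps of $W(\mathsf{E}_6)$ together with the cases above — whence $\epsilon_{12}\in k$, since $\mathbb{Q}(\epsilon_{12})=\mathbb{Q}(i,\epsilon_3)$. The main obstacle is the argument for $8A$: because no eigenvalue of $\widetilde{g}$ itself is singled out by multiplicity, one must combine the constraints coming from $\widetilde{g}$ and $\widetilde{g}^{\,2}$ (or track simultaneously how $\Gal(\bar{k}/k)$ permutes the fixed points and acts on the relevant roots of unity), and check carefully that the full primitive eighth root of unity, and not merely $i$, is forced into $k$.
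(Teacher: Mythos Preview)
Your approach---Galois stability of the eigenvalue multiset of a lift $\widetilde g\in\GL_4(k)$---is the paper's rational canonical form computation in different packaging: for a diagonalizable matrix the RCF lies in $k$ precisely when the eigenvalue multiset is $\Gal(\bar k/k)$-stable. Your handling of $3A$, $4A$, $6A$, $9A$, and $12A$ agrees with the paper, which likewise reduces $6A$, $9A$, $12A$ to $3A$ and $4A$ via power maps (citing Table~4 of \cite{DolDun18}).

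There is one genuine gap: your claim that $(6C)^2$ has type $3A$ is false. The representative for $6C$ is $[1,1,\epsilon_6^{4},\epsilon_6]$, whose square is $[1,1,\epsilon_3,\epsilon_3]$, with two $2$-dimensional eigenspaces; but an element of type $3A$ always acts, up to projective equivalence, by $[\epsilon_3,1,1,1]$ (Theorem~10.4 of \cite{DolDun18}), with eigenspaces of dimensions $1$ and $3$. Matching the eigenvalue pattern against those of $3C$ and $3D$ shows $(6C)^2$ is of type $3C$, which does \emph{not} force $\epsilon_3\in k$. So your power reduction breaks for $6C$, and the paper accordingly treats $6C$ by a direct RCF computation rather than by powers. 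Your own tool~(a) repairs this instantly: in the multiset $\mu\{1,1,\epsilon_6^{4},\epsilon_6\}$ the value $\mu$ is the unique eigenvalue of multiplicity two, so $\mu\in k$; the two simple eigenvalues $\mu\epsilon_6^{4}=\mu\epsilon_3^{2}$ and $\mu\epsilon_6$ have different multiplicative orders, hence cannot be exchanged by Galois, and both lie in $k$.

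A smaller slip: your stated representative $[1,\epsilon_8,\epsilon_8^{2},\epsilon_8^{4}]$ for $8A$ does not preserve any smooth cubic (the invariant cubic forms omit the variable $x_1$). The correct representative, from Lemma~12.12 of \cite{DolDun18}, is $[1,\epsilon_8^{6},\epsilon_8,\epsilon_8^{4}]$. Your two-step argument (constrain first via $\widetilde g^{\,2}$, then via $\widetilde g$) goes through unchanged with the correct matrix; the paper instead just writes down the RCF and reads off $\epsilon_8\in k$.
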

\begin{proof}
First note that an element $P \in \PGL_4(\bar{k})$ is similar to an element $Q \in \PGL_4(k)$ if and only if the rational canonical form of a lift of $P$ to $\GL_4(\bar{k})$ is defined over $k$.
    
(i) By Theorem 10.4 of \cite{DolDun18}, an element of type $3A$ acts by the diagonal matrix $M = [\epsilon_3,1,1,1]$ up to projective equivalence. Let $\wt{M} = [a\epsilon_3,a,a,a]$ be a lift of $M$ to $\GL_4(\bar{k})$. Similarly, an element of type $6C$ acts by the diagonal matrix $N = [1,1,\epsilon_6^4, \epsilon_6]$ up to projective equivalence by Section 9.5.1 of \cite{Dol12}. Let $\wt{N} = [a,a,a\epsilon_6^4,a\epsilon_6]$ be a lift to $\GL_4(\bar{k})$. We calculate the rational canonical forms:
\begin{equation*}
    \wt{M}_{RCF} = \begin{bmatrix}
    a & 0 & 0 & 0 \\
    0 & a & 0 & 0 \\
    0 & 0 & 0 & -a^2\epsilon_3 \\
    0 & 0 & 1 & a(\epsilon_3 + 1)
    \end{bmatrix}
    \quad
    \text{and}
    \quad
    \wt{N}_{RCF} = \begin{bmatrix}
        a & 0 & 0 & 0 \\
        0 & 0 & 0 & -a^3\epsilon_6^2 \\
        0 & 1 & 0 & a^2\epsilon_6^2 \\
        0 & 0 & 1 & a
    \end{bmatrix}.
\end{equation*}
If the rational canonical form in either case is defined over $k$, then $\epsilon_3 \in k$. By Table 4 of \cite{DolDun18}, elements of type $6A$ or $9A$ have powers of type $3A$, so if an element of type $6A$ or $9A$ is realized over $k$, then $\epsilon_3 \in k$.

(ii) By Lemma 11.1 of \cite{DolDun18}, an element of type $4A$ acts by the diagonal matrix $[i,-1,1,1]$ up to projective equivalence. We calculate the rational canonical form of an arbitrary lift and conclude that $i \in k$.

(iii) The cyclic group generated by an element of type $8A$ in $W(\mathsf{E}_6)$ acts by the group generated by the diagonal matrix $[1,\epsilon_8^6, \epsilon_8, \epsilon_8^4]$ up to projective equivalence by Lemma 12.12 of \cite{DolDun18}. We calculate the rational canonical form of an arbitrary lift and conclude that $\epsilon_8 \in k$.

(iv) By Table 4 of \cite{DolDun18}, an element of type $12A$ has powers of type $3A$ and $4A$, so that $\epsilon_3 \in k$ and $i \in k$. 
\end{proof}

\begin{remark}
    Lemma \ref{lem:elements} is used to prove Theorem \ref{thm:main}, but we can make a stronger claim regarding the elements of $W(\mathsf{E}_6)$ after proving Theorem \ref{thm:main}; see Corollary \ref{cor:elements}.
\end{remark}

\subsection{Subgroups of \texorpdfstring{$5\mathsf{A}$}{5A}, \texorpdfstring{$8\mathsf{A}$}{8A}, and \texorpdfstring{$12\mathsf{A}$}{12A}} Recall that the \textit{Clebsch cubic surface} is the smooth cubic surface given by the equation
\begin{equation}
\label{eq:clebsch}
    \left(\sum_{i \neq j} x_i^2x_j \right) + 2 \left(\sum_{i < j < k} x_ix_jx_k \right) = 0
\end{equation}
in $\mathbb{P}^3$. Over $\bar{k}$, the automorphism group of the Clebsch cubic surface is isomorphic to $S_5$ and its image in $W(\mathsf{E}_6)$ is the group $5\mathsf{A}$. 

\begin{prop}
\label{prop:5A}
    Let $G$ be a subgroup of $5\mathsf{A}$ up to conjugacy in $W(\mathsf{E}_6)$. Then $G$ acts by automorphisms on the Clebsch cubic surface over any field $k$ of characteristic zero.
\end{prop}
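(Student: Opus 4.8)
The plan is to exhibit the Clebsch cubic surface as being defined over $\mathbb{Q}$ (indeed over the prime field), so that the full geometric automorphism group $\Aut(\overline{X}) \cong S_5$ is already defined over $\mathbb{Q} \subseteq k$, and then to descend to subgroups. First I would observe that the defining equation \eqref{eq:clebsch} has rational coefficients and cuts out a smooth surface in $\mathbb{P}^3_k$ for every field $k$ of characteristic zero (smoothness can be checked once over $\mathbb{Q}$ and is preserved by base change). Hence the Clebsch surface $X$ is a smooth cubic surface over $k$ for every such $k$.

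Next I would recall that, as stated in the excerpt, over $\bar{k}$ the automorphism group of the Clebsch surface is $S_5$ with image $5\mathsf{A}$ in $W(\mathsf{E}_6)$. The key point is that all of these automorphisms are already defined over $\mathbb{Q}$: the $S_5$-action permutes coordinates on a $\mathbb{P}^4$ in which the Clebsch surface is the hyperplane section $\sum_{i=0}^4 y_i = 0$ of the diagonal (Clebsch) cubic $\sum_{i=0}^4 y_i^3 = 0$, and this permutation action is visibly defined over $\mathbb{Q}$. Therefore $\Aut(X) = \Aut(X_{\mathbb{Q}}) \otimes k \cong S_5$ and its image in $W(\mathsf{E}_6)$ is exactly $5\mathsf{A}$, for every field $k$ of characteristic zero. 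In particular, if $G$ is any subgroup of $5\mathsf{A}$ up to conjugacy in $W(\mathsf{E}_6)$, then $G$ is contained in the image of $\Aut(X) \hookrightarrow W(\mathsf{E}_6)$, which is precisely the condition that $G$ acts by automorphisms on $X$ over $k$.

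I do not expect a serious obstacle here, since the Clebsch surface and its automorphisms are classically defined over $\mathbb{Q}$; the only mild care needed is to justify that no new automorphisms appear and none are lost under base change. For the former, one uses that $\Aut$ of a smooth cubic surface embeds in $W(\mathsf{E}_6)$ and $5\mathsf{A}$ is a maximal stratum in Figure 1, so $\Aut(X)$ over $k$ cannot be larger than $S_5$; for the latter, one checks the $S_5$-action is literally given by permutation matrices with integer entries, hence defined over the prime field. This completes the argument: every subgroup of $5\mathsf{A}$ is realized on the Clebsch cubic surface over an arbitrary field of characteristic zero, with no condition on $k$.
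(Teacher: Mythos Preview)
Your argument is correct and follows essentially the same approach as the paper: both show that the full geometric automorphism group $S_5$ of the Clebsch surface is already defined over the prime field, so $\Aut(X)=\Aut(\overline{X})$ and every subgroup of $5\mathsf{A}$ acts over any $k$. The only cosmetic difference is that the paper writes down two explicit generators $g_1,g_2\in\PGL_4(k)$ with integer entries, whereas you invoke the classical $\mathbb{P}^4$ model with $S_5$ permuting coordinates; these are equivalent descriptions of the same rational $S_5$-action.
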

\begin{proof}
    Let $X$ be the Clebsch cubic surface. Then $\Aut(X) = \Aut(\overline{X})$ since $\Aut(\overline{X})$ is generated by the morphisms
    \begin{align*}
       &g_1: (x_0:x_1:x_2:x_3) \mapsto (-x_0 - x_1 - x_2 - x_3: x_0 : x_1: x_2) \\
       &g_2: (x_0 : x_1:x_2:x_3) \mapsto (x_1: x_0:x_2:x_3)
    \end{align*}
    defined over $k$.
\end{proof} 

Over $\bar{k}$, the group $8\mathsf{A}$ acts by automorphisms on the surface given by
\begin{equation}
    \label{eq:8A}
    x_0^3 + x_0x_3^2 - x_1x_2^2 + x_1^2x_3 = 0
\end{equation}
and this surface is unique up to projective equivalence (see Lemma 12.12 of \cite{DolDun18}).

\begin{prop}
\label{prop:8A}
Let $k$ be a field of characteristic zero. Let $G$ be a subgroup of $8\mathsf{A}$ up to conjugacy in $W(\mathsf{E}_6)$.
\begin{itemize}
    \item[(i)] If $G$ is trivial or $2\mathsf{A}$, then $G$ is realized over $k$.
    \item[(ii)] If $G$ is $4\mathsf{A}$, then $G$ is realized over $k$ if and only if $i \in k$.
    \item[(iii)] If $G$ is $8\mathsf{A}$, then $G$ is realized over $k$ if and only if $\epsilon_8 \in k$.
\end{itemize}
In each case, $G$ acts by automorphisms on the cubic surface given by Equation \ref{eq:8A}.
\end{prop}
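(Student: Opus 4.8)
The plan is to work directly with the surface $X$ of Equation \ref{eq:8A}, which is smooth and defined over $\mathbb{Q}$, hence over every field $k$ of characteristic zero. By Lemma 12.12 of \cite{DolDun18} (and the preceding discussion), $\overline{X}$ has automorphism group the cyclic group of order $8$ generated by the automorphism $\sigma$ acting by the diagonal matrix $[1,\epsilon_8^6,\epsilon_8,\epsilon_8^4]$, with image $8\mathsf{A}$ in $W(\mathsf{E}_6)$. First I would record the powers I need: writing $i=\epsilon_8^2$, one computes $\sigma^2=[1,-1,i,1]$ and $\sigma^4=[1,1,-1,1]$ in $\PGL_4(\bar k)$. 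Since $8\mathsf{A}\cong C_8$ is cyclic, the only conjugacy classes of its subgroups are the trivial group together with $\langle\sigma^4\rangle$, $\langle\sigma^2\rangle$, $\langle\sigma\rangle$, and these are exactly the classes $2\mathsf{A}$, $4\mathsf{A}$, $8\mathsf{A}$ (consistent with the chain $1A\to 2A\to 4A\to 8A$ of Figure 1; note also that a permutation of coordinates carries $\sigma^2$ to the type-$4A$ normal form $[i,-1,1,1]$ of Lemma \ref{lem:elements}(ii)).

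For the ``if'' directions I would use the observation recorded before Lemma \ref{lem:elements}: an automorphism of $\overline X$ given by a matrix in $\PGL_4(\bar k)$ descends to an automorphism of $X$ over $k$ whenever that matrix already has a representative in $\GL_4(k)$, which for a diagonal matrix simply means the ratios of its entries lie in $k$. Hence the identity realizes the trivial group over any $k$; $\sigma^4=[1,1,-1,1]$ has rational entries, so $2\mathsf{A}$ is realized over every $k$; if $i\in k$ then $\sigma^2=[1,-1,i,1]\in\PGL_4(k)$, so $4\mathsf{A}$ is realized; and if $\epsilon_8\in k$ then $\sigma=[1,\epsilon_8^6,\epsilon_8,\epsilon_8^4]\in\PGL_4(k)$, so $8\mathsf{A}$ is realized. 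In every case the relevant subgroup acts by automorphisms on $X$ itself, which also yields the final sentence of the proposition.

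For the converse implications in (ii) and (iii) I would invoke Lemma \ref{lem:elements}: if $4\mathsf{A}$ is realized over $k$, then its generator, which is an element of type $4A$, acts by automorphisms on a smooth cubic surface over $k$, so $i\in k$ by Lemma \ref{lem:elements}(ii); likewise, if $8\mathsf{A}$ is realized over $k$, then the type-$8A$ generator acts over $k$, so $\epsilon_8\in k$ by Lemma \ref{lem:elements}(iii). There is no genuinely hard step here: the only point requiring a little care is correctly matching the cyclic subgroups $\langle\sigma^2\rangle$ and $\langle\sigma^4\rangle$ with the named conjugacy classes $4\mathsf{A}$ and $2\mathsf{A}$ in $W(\mathsf{E}_6)$ and with the normal forms occurring in Lemma \ref{lem:elements}, after which everything reduces to elementary bookkeeping with diagonal matrices.
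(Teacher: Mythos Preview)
Your approach is essentially identical to the paper's: both invoke Lemma \ref{lem:elements} for the ``only if'' directions and realize the groups on the surface of Equation \ref{eq:8A} via the powers of the diagonal generator $g=[1,\epsilon_8^6,\epsilon_8,\epsilon_8^4]$. Your write-up is more explicit about computing $g^2$ and $g^4$ and matching them to the named classes, but the argument is the same.
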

\begin{proof}
    The forward directions follow immediately from Lemma \ref{lem:elements}. For the reverse directions, let $X$ be the surface given by Equation \ref{eq:8A}. Then $\Aut(\overline{X})$ is generated by the diagonal morphism $g = [1, \epsilon_8^6, \epsilon_8, \epsilon_8^4]$. Over any $k$, the group $\langle g^4 \rangle$ realizes $2\mathsf{A}$. If $i \in k$, then $\langle g^2 \rangle$ realizes $4\mathsf{A}$. If $\epsilon_8 \in k$, then $\langle g \rangle$ realizes $8\mathsf{A}$.
\end{proof}

Over $\bar{k}$, the group $12\mathsf{A}$ acts by automorphisms on the surface given by
\begin{equation}
    \label{eq:H33C4}
        x_0^3 + x_1^3 + x_2^3 + x_3^3 + 3(\sqrt{3}-1)x_0x_1x_2 = 0
\end{equation}
and this surface is unique up to projective equivalence (see Lemma 12.15 of \cite{DolDun18}).

\begin{prop}
\label{prop:12A}
Let $k$ be a field of characteristic zero. Let $G$ be a subgroup of $12\mathsf{A}$ up to conjugacy in $W(\mathsf{E}_6)$.
\begin{itemize}
    \item[(i)] Suppose $\epsilon_{12} \in k$. Then $G$ is realized over $k$.

    \item[(ii)] Suppose $\epsilon_3 \not\in k$ and $i \in k$. Then $G$ is realized over $k$ if and only if $G$ is a subgroup of $4\mathsf{A}$ or $3\mathsf{D}$.

    \item[(iii)] Suppose $\epsilon_3 \in k$ and $i \not\in k$. Then $G$ is realized over $k$ if and only if $G$ is a subgroup of $3\mathsf{A}$.

    \item[(iv)] Suppose $\epsilon_3 \not\in k$ and $i \not\in k$. Then $G$ is realized over $k$ if and only if $G$ is a subgroup of $3\mathsf{D}$.
\end{itemize}
\end{prop}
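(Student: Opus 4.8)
The plan is to treat the four cases uniformly. The necessity (``only if'') statements will follow by combining Lemma \ref{lem:elements} with the internal structure of $12\mathsf{A}=\mathcal{H}_3(3)\rtimes C_4$, and the sufficiency statements by exhibiting explicit cubic surfaces over $k$ together with explicit projective automorphisms. Note first that the four hypotheses are mutually exclusive and exhaustive: $\epsilon_{12}\in k$ is equivalent to ``$\epsilon_3\in k$ and $i\in k$'', in which case $\sqrt{3}=\epsilon_{12}+\epsilon_{12}^{-1}\in k$ as well. From \cite{DolDun18} I will extract the following facts about $12\mathsf{A}\le W(\mathsf{E}_6)$: the subgroup $\mathcal{H}_3(3)$ is the normal Sylow $3$-subgroup; its nontrivial central elements are of type $3A$ and its non-central elements of type $3D$; every element of $12\mathsf{A}$ of order $4$ is of type $4A$, every element of order $12$ of type $12A$, and every element of order $6$ of type $6A$ or $6C$ (the class $6E$ does not lie below $12\mathsf{A}$ in Figure 1); and $3\mathsf{A}\cong\mathcal{H}_3(3)\rtimes C_2$ is the unique index-$2$ subgroup, $4\mathsf{A}\cong C_4$ is a complement to $\mathcal{H}_3(3)$, and $3\mathsf{D}\cong S_3$ is the normalizer in $3\mathsf{A}$ of a non-central $C_3\le\mathcal{H}_3(3)$. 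The geometric models will be the surface of Equation \ref{eq:H33C4} and the family $X_\lambda\colon x_0^3+x_1^3+x_2^3+x_3^3+\lambda x_0x_1x_2=0$, which is smooth exactly when $\lambda^3\ne-27$; on $X_\lambda$ the group $\mathcal{H}_3(3)\rtimes C_2$ acts through $\operatorname{diag}(1,\epsilon_3,\epsilon_3^2,1)$, the cyclic permutation of $x_0,x_1,x_2$, and a transposition of two of them, and this copy of $\mathcal{H}_3(3)\rtimes C_2$ is conjugate to $3\mathsf{A}$ in $W(\mathsf{E}_6)$ (it is an order-$54$ subgroup acting on a cubic surface, hence conjugate to $3\mathsf{A}$ by the classification, and it contains the $3A$-automorphism $[1:1:1:\epsilon_3^2]$).

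For the sufficiency directions: in case (i), since $\sqrt{3}\in k$ the surface $X$ of Equation \ref{eq:H33C4} is defined over $k$, and every automorphism in $\Aut(\overline{X})=12\mathsf{A}$ is represented by a matrix with entries in $\mathbb{Z}[\epsilon_{12}]\subseteq k$ (read off from Lemma 12.15 of \cite{DolDun18}); hence $\Aut(X)=12\mathsf{A}$ and every $G\le 12\mathsf{A}$ is realized over $k$. When $\epsilon_3\in k$, fix $\lambda\in\mathbb{Q}^\times$ with $\lambda^3\ne-27$: the three generators of the copy of $3\mathsf{A}$ on $X_\lambda$ listed above all lie in $\PGL_4(k)$, so $3\mathsf{A}\le\Aut(X_\lambda)$ and every $G\le 3\mathsf{A}$ is realized over $k$ --- this is ``$\Leftarrow$'' in (iii). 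The subgroup $S_3\cong 3\mathsf{D}$ generated by the cyclic permutation and a transposition of $x_0,x_1,x_2$ is already defined over $\mathbb{Q}$, so every $G\le 3\mathsf{D}$ is realized over every $k$ (the $3\mathsf{D}$ alternative in (ii), and ``$\Leftarrow$'' in (iv)); and when $i\in k$, every $G\le 4\mathsf{A}$ is realized over $k$ by Proposition \ref{prop:8A}(ii) (the $4\mathsf{A}$ alternative in (ii)).

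For the necessity directions, let $G\le 12\mathsf{A}$ be realized over $k$. If $i\notin k$, then by Lemma \ref{lem:elements}(ii),(iv) $G$ has no element of type $4A$ or $12A$; since $12\mathsf{A}$ has no element of order $8$ and its elements of orders $4$ and $12$ all have those types, $G$ has no element of order divisible by $4$, so its image in $12\mathsf{A}/\mathcal{H}_3(3)\cong C_4$ is contained in the unique $C_2$, and $G\le\mathcal{H}_3(3)\rtimes C_2=3\mathsf{A}$ up to conjugacy; this is ``$\Rightarrow$'' in (iii). If $\epsilon_3\notin k$, then by Lemma \ref{lem:elements}(i) $G$ has no element of type $3A$, $6A$, or $6C$, hence $G\cap\mathcal{H}_3(3)$ contains no central element of $\mathcal{H}_3(3)$ --- so it is trivial or a non-central $C_3$ --- and $G$ has no element of order $6$. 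If $G\cap\mathcal{H}_3(3)=1$, then $G$ is a complement to $\mathcal{H}_3(3)$ in $\mathcal{H}_3(3)\,G$ and, by conjugacy of Hall complements, $G$ is conjugate into $C_4=4\mathsf{A}$. If $G\cap\mathcal{H}_3(3)$ is a non-central $C_3$, then $|G|\in\{3,6,12\}$; the order-$12$ case yields $G\cong C_3\rtimes C_4$, which contains an element of order $6$, and the order-$6$ case allows only $S_3$ since $C_6$ again has an element of order $6$, so $G\cong C_3$ or $S_3$ and $G\le 3\mathsf{D}$ up to conjugacy. This proves ``$\Rightarrow$'' in (ii); and in (iv) one first lands in $3\mathsf{A}$ by the $i\notin k$ argument and then reruns the $\epsilon_3\notin k$ argument inside $3\mathsf{A}=\mathcal{H}_3(3)\rtimes C_2$ to conclude $G\le 3\mathsf{D}$.

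The main difficulty is the group-theoretic bookkeeping inside $\mathcal{H}_3(3)\rtimes C_4$: one must be certain of the $W(\mathsf{E}_6)$-conjugacy class of every element of order $3$, $4$, $6$, and $12$ --- in particular that none of the order-$4$ or order-$6$ elements escapes the reach of Lemma \ref{lem:elements} --- and then carry out the short but fussy coset computations used above; these facts are read off from \cite{DolDun18} and Figure 1. Once they are in hand, the surface side is routine: in each case the model surface is manifestly defined over $k$, and the relevant automorphisms are monomial or cyclotomic-diagonal and so visibly defined over the claimed subfield.
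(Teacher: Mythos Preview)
Your proof is correct and follows essentially the same strategy as the paper: both use the extension $1\to\mathcal{H}_3(3)\to 12\mathsf{A}\to C_4\to 1$ together with Lemma~\ref{lem:elements} to bound $G\cap\mathcal{H}_3(3)$ and $\rho(G)$, and both realize the sufficiency side on the surface of Equation~\ref{eq:H33C4} and on the family $X_\lambda$. The only noteworthy difference is in excluding the case $|G|=12$ with $G\cap\mathcal{H}_3(3)$ a non-central $C_3$: the paper argues that $G$ would be cyclic or dicyclic and then observes that $12\mathsf{A}$ contains no $\Dic_{12}$ (and $C_{12}$ is excluded by Lemma~\ref{lem:elements}(iv)), whereas you argue uniformly that any such $G$ would contain an element of order~$6$, which is already excluded; your route is slightly cleaner since it avoids the external input that $\Dic_{12}\not\subset 12\mathsf{A}$. (A small imprecision: the paper records that all order-$6$ elements of $12\mathsf{A}$ are of type $6A$, not ``$6A$ or $6C$'', but since both classes are covered by Lemma~\ref{lem:elements}(i) this does not affect your argument.)
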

\begin{proof}
    (i) Let $X$ be the surface given by Equation \ref{eq:H33C4}. Then $\Aut(\overline{X})$ is isomorphic to $\mathcal{H}_3(3) \rtimes C_4$ and generated by the morphisms
    \begin{align*}
        &g_1: (x_0:x_1:x_2:x_3) \mapsto (x_0:x_1:x_2:\epsilon_3 x_3) \\
        &g_2: (x_0:x_1:x_2:x_3) \mapsto (x_1:x_2:x_0:x_3) \\
        &g_3: (x_0:x_1:x_2:x_3) \mapsto (\epsilon_3 x_0: \epsilon_3^2 x_1: x_2: x_3) \\
        &g_4: (x_0:x_1:x_2:x_3) \mapsto \left(x_0 + x_1 + x_2: x_0 + \epsilon_3 x_1 + \epsilon_3^2 x_2 : x_0 + \epsilon_3^2 x_1 + \epsilon_3 x_2 : \sqrt{3}x_3 \right)
    \end{align*}
    defined over $k$. 
    
    Since $12\mathsf{A}$ is isomorphic to $\mathcal{H}_3(3) \rtimes C_4$, consider the exact sequence
    \begin{equation*}
        1 \to \mathcal{H}_3(3) \to 12\mathsf{A} \xrightarrow[]{\rho} C_4 \to 1.
    \end{equation*}
    By the Schur-Zassenhaus theorem, $G$ is always isomorphic to $(G \cap \mathcal{H}_3(3)) \rtimes \rho(G)$. The elements of order 4 in $\mathcal{H}_3(3) \rtimes C_4$ are of type $4A$, so $\rho(G) \cong C_4$ implies $i \in k$ by Lemma \ref{lem:elements}. Moreover, $\mathcal{H}_3(3)$ has 2 elements of type $3A$ and 24 elements of type $3D$. The reader can reference Table 7 of \cite{DolDun18} for a count of the elements in $12\mathsf{A}$ of each conjugacy type.
    
    (ii) Since $\epsilon_3 \not\in k$, Lemma \ref{lem:elements} implies $G$ does not contain an element of type $3A$. Then $G \cap \mathcal{H}_3(3)$ only contains elements of type $3D$, and thus $G \cap \mathcal{H}_3(3)$ is trivial or generated by a single element of type $3D$. If $G \cap \mathcal{H}_3(3)$ is trivial, then $G$ is isomorphic to $\rho(G)$ and therefore a subgroup of $4\mathsf{A}$ up to conjugacy in $W(\mathsf{E}_6)$. Suppose $G \cap \mathcal{H}_3(3)$ is isomorphic to $C_3$. If $\rho(G)$ is $C_4$, then $C_4$ is a 2-Sylow subgroup, so $G$ is cyclic or dicyclic. However, neither case is possible since $12\mathsf{A}$ does not contain a dicyclic group of order 12, and $G$ cannot contain an element of type 12A by Lemma \ref{lem:elements}. Therefore, $\rho(G)$ is trivial or $C_2$. Every element of order 6 in $12\mathsf{A}$ is type $6A$, so if $\rho(G)$ is $C_2$, then $G$ is isomorphic to $S_3$. In this case $G$ is $3\mathsf{D}$ up to conjugacy in $W(\mathsf{E}_6)$.

    (iii) If $i \not\in k$, then $G$ does not contain an element of type $4A$ by Lemma \ref{lem:elements}. So $G$ is a subgroup of $\mathcal{H}_3(3) \rtimes C_2$. Since $\epsilon_3 \in k$, all of $3\mathsf{A}$ is realized on a surface of the form $V(x_0^3 + x_1^3 + x_2^3 + x_3^3 + \lambda x_0x_1x_2)$ for any $\lambda \in k$ by Proposition 10.6 of \cite{DolDun18}. Here $\Aut(\overline{X})$ is generated by $g_1,g_2,g_3$ and the morphism $(x_0:x_1:x_2:x_3) \mapsto (x_1:x_0:x_2:x_3)$.

    (iv) If $\epsilon_3 \not\in k$ and $i \not\in k$ then $G \cap \mathcal{H}_3(3)$ is either $C_3$ or trivial and $\rho(G)$ is either $C_2$ or trivial. Once again this forces $G$ to be a subgroup of $3\mathsf{D}$ up to conjugacy in $W(\mathsf{E}_6)$.
\end{proof}

\subsection{Subgroups of \texorpdfstring{$3\mathsf{C}$}{3C}}

By Lemma 10.14 of \cite{DolDun18}, a smooth cubic surface admitting an action of an element of type $3C$ is projectively equivalent to the Fermat cubic surface over $\bar{k}$. Recall that the \textit{Fermat cubic surface} $X_0$ is given by the equation
\begin{equation}
\label{eq:fermat}
    x_0^3 + x_1^3 + x_2^3 + x_3^3 = 0.
\end{equation} 
The group $3\mathsf{C}$ is realized as $\Aut(\overline{X_0})$, and $3\mathsf{C}$ is isomorphic to $C_3^3 \rtimes S_4$. We identify $C_3^3$ with
\begin{equation*}
    \left\{[a_1,a_2,a_3,a_4] \, \big| \, a_i \in \mathbb{F}_3 \text{ and } \sum a_i = 0 \right\},
\end{equation*}
and we write elements of $\Aut(\overline{X_0})$ as $[a_1,a_2,a_3,a_4] \cdot \sigma$ where $\sigma$ is an element of $S_4$. Then 
\begin{equation*}
    \sigma^{-1}[a_1,a_2,a_3,a_4]\sigma = [a_{\sigma(1)},a_{\sigma(2)},a_{\sigma(3)},a_{\sigma(4)}]
\end{equation*}
Note that our choice of representatives for $C_3^3$ gives a faithful representation of $\Aut(\overline{X_0})$ into $\GL_4(\bar{k})$.

We give two constructions that realize subgroups of $3\mathsf{C}$ as automorphisms of a cubic surface. We will refer to these examples later.

\begin{example}($3\mathsf{C}_1$)
\label{ex:C32D8}
Let $k$ be a field of characteristic zero. Let $G = \langle g_1,g_2,g_3,g_4 \rangle$ be the subgroup of $\PGL_4(k)$ with
\begin{equation*}
    g_1 = \begin{bmatrix} 0 & -1 & 0 & 0 \\ 1 & -1 & 0 & 0 \\ 0 & 0 & 1 & 0 \\ 0 & 0 & 0 & 1 \end{bmatrix},
    \,\,
    g_2 = \begin{bmatrix} 1 & 0 & 0 & 0 \\ 0 & 1 & 0 & 0 \\ 0 & 0 & 0 & -1 \\ 0 & 0 & 1 & -1 \end{bmatrix}, \,\,
    g_3 = \begin{bmatrix} 0 & 0 & 1 & 0 \\ 0 & 0 & 0 & 1 \\ 0 & 1 & 0 & 0 \\ 1 & 0 & 0 & 0 \end{bmatrix}, \,\,
    g_4 = \begin{bmatrix} 0 & 0 & 0 & 1 \\ 0 & 0 & 1 & 0 \\ 0 & 1 & 0 & 0 \\ 1 & 0 & 0 & 0 \end{bmatrix}.
\end{equation*}
Then $G$ is isomorphic to $C_3^2 \rtimes D_8$. Notice that $g_1g_2$ is an element of $G$ of type $3C$. If $G$ acts on a smooth cubic surface it must be projectively equivalent to the Fermat cubic surface over $\bar{k}$, and the resulting image of $G$ in $W(\mathsf{E}_6)$ is contained in $3\mathsf{C}$. Notice that $G$ acts by automorphisms on the surface given by
\begin{equation}
\label{eq:C32D8}
    2(x_0^3 + x_1^3 + x_2^3 + x_3^3) - 3(x_0^2x_1 + x_0x_1^2 + x_2^2x_3 + x_2x_3^2) = 0.
\end{equation}
There is a unique subgroup of $3\mathsf{C}$ isomorphic to $C_3^2 \rtimes D_8$ up to conjugacy in $W(\mathsf{E}_6)$ --- in fact, up to conjugacy in $3\mathsf{C}$. We call this subgroup $3\mathsf{C}_1$. We have shown that $3\mathsf{C}_1$ acts by automorphisms on the smooth cubic surface above over any field of characteristic zero.
\end{example}

\begin{remark}
\label{rem:C32D8}
    There is a Galois cohomological approach to realizing $3\mathsf{C}_1$ when $\epsilon_3 \not\in k$. Let $X_0$ be $V(x_0^3 + x_1^3 + x_2^3 + x_3^3)$ over a field $k$ of characteristic zero with $\epsilon_3 \not\in k$. Recall that the forms of $X_0$ up to isomorphism over $k$ are in bijection with $H^1(\bar{k}/k,\Aut(\overline{X_0}))$ (see section III.1 of \cite{Ser97}). We define a cocycle $c \in Z^1(\bar{k}/k,\Aut(\overline{X_0}))$ by
    \begin{equation*}
        c_{\gamma} = \begin{cases}
            \id, &\text{ if } \gamma(\epsilon_3) = \epsilon_3 \\
            (12)(34), &\text{ if } \gamma(\epsilon_3) = \epsilon_3^2
        \end{cases}
    \end{equation*}
    Twisting by $c$, we obtain a form ${}_c X_0$ of the Fermat cubic surface. The action of $\Gamma := \Gal(\bar{k}/k)$ on ${}_c \Aut(\overline{X_0})$ is defined by ${}^{\gamma'} g = c_{\gamma} \cdot {}^{\gamma} g \cdot c_{\gamma}^{-1}$, and $\Aut({}_c X_0) = ({}_c \Aut(\overline{X_0}))^{\Gamma}$. Computing $({}_c \Aut(\overline{X_0}))^{\Gamma}$, we find
    \begin{equation*}
    \Aut({}_c X_0) = \langle [1,2,0,0], [0,0,1,2],(1324),(12) \rangle,
    \end{equation*}
    a representative for $3\mathsf{C}_1$. 
\end{remark}

\begin{example}($3\mathsf{C}_2$)
\label{ex:C3C4}
Let $k$ be a field of characteristic zero. Consider the subgroup $G = \langle [1,2,1,2], (1234) \rangle$ of $\Aut(\overline{X_0})$. Then $G$ is isomorphic to the dicyclic group of order 12, which we write as $\Dic_{12}$. This is the unique subgroup of $\Aut(\overline{X_0})$ isomorphic to $\Dic_{12}$ up to conjugacy in $\Aut(\overline{X_0})$. We use $3\mathsf{C}_2$ to refer to the corresponding conjugacy class of subgroups in $W(\mathsf{E}_6)$. Now assume $x^2 + y^2 = -3$ has a solution over $k$. Under this assumption, we can construct a faithful representation of $\Dic_{12}$ in $\GL_2(k)$. Consider the presentation $\Dic_{12} = \langle r,s \mid r^3 = \id, \, s^4 = \id, \, srs^{-1} = r^{-1} \rangle$. Let $\alpha,\beta \in k$ with $\alpha^2 + \beta^2 = -3$. We have a faithful representation of $\Dic_{12}$ in $\GL_4(k)$ defined by
\begin{equation*}
    r \mapsto \begin{bmatrix} 0 & -1 & 0 & 0 \\ 1 & -1 & 0 & 0 \\ 0 & 0 & 0 & -1 \\ 0 & 0 & 1 & -1 \end{bmatrix}, \quad
    s \mapsto \begin{bmatrix} 0 & 1 & 0 & 0 \\ 1 & 0 & 0 & 0 \\ 0 & 0 & \dfrac{\alpha-1}{\beta} & \dfrac{-\alpha-1}{\beta} \\ 0 & 0 & \dfrac{-2}{\beta} & \dfrac{-\alpha + 1}{\beta} \end{bmatrix}.
\end{equation*}
This representation is isomorphic to the representation defined by $r \mapsto [1,2,1,2]$ and $s \mapsto (1234)$ with image in $\Aut(\overline{X_0})$. The new representation is defined over $k$ and acts on the smooth cubic surface given by
\begin{align}
\label{eq:C3C4}
    8(x_0^3 + x_1^3) - 12(x_0^2x_1 &+ x_0x_1^2) + (\alpha - 1)(x_0x_2^2 + x_1x_3^2) \\
    &- (\alpha + 1)(x_1x_2^2 + 2x_0x_2x_3) + 4x_1x_2x_3 + 2x_0x_3^2 = 0 \nonumber
\end{align}
also defined over $k$. Note that this surface is projectively equivalent over $\bar{k}$ to the Fermat cubic surface. Therefore, $3\mathsf{C}_2$ acts by automorphisms on a smooth cubic surface when $x^2 + y^2 = -3$ has a solution over $k$.
\end{example}

\begin{prop}
\label{prop:C3C4}
Let $k$ be a field of characteristic zero. Then $3\mathsf{C}_2$ acts by automorphisms on a smooth cubic surface over $k$ if and only if $x^2 + y^2 = -3$ has a solution over $k$.
\end{prop}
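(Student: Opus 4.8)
The ``if'' direction is already available: if $x^2+y^2=-3$ has a solution over $k$, then Example~\ref{ex:C3C4} exhibits $3\mathsf{C}_2$ acting by automorphisms on the smooth cubic surface of Equation~\ref{eq:C3C4}. So the plan is to establish the converse. Suppose $3\mathsf{C}_2$ acts by automorphisms on a smooth cubic surface $X$ over $k$. If $\epsilon_3\in k$, then $\sqrt{-3}\in k$ and $(x,y)=(0,\sqrt{-3})$ is a solution, so I may assume $\epsilon_3\notin k$. Since the order-$3$ elements of $3\mathsf{C}_2$ have type $3C$ (they are conjugate in $\Aut(\overline{X_0})$ to $\operatorname{diag}(\epsilon_3,\epsilon_3,\epsilon_3^2,\epsilon_3^2)$), Lemma~10.14 of \cite{DolDun18} shows $\overline{X}$ is projectively equivalent to the Fermat cubic $X_0$. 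Thus $X={}_cX_0$ for a cocycle $c\in Z^1(\Gamma,\Aut(\overline{X_0}))$, where $\Gamma=\Gal(\bar k/k)$ and $\Aut(\overline{X_0})=C_3^3\rtimes S_4$ carries the $\Gamma$-action that factors through $\Gal(k(\epsilon_3)/k)$, inverting the $C_3^3$-part and fixing the permutation subgroup $S_4$ pointwise. Here $\Aut(X)$ is identified with $\{h\in\Aut(\overline{X_0}) : c_\gamma\cdot{}^\gamma h\cdot c_\gamma^{-1}=h \text{ for all }\gamma\in\Gamma\}$.

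Because all subgroups of $\Aut(\overline{X_0})$ isomorphic to $\Dic_{12}$ are conjugate (Example~\ref{ex:C3C4}), after replacing $c$ by a cohomologous cocycle I may assume the standard copy $G_0=\langle[1,2,1,2],(1234)\rangle$ lies in $\Aut(X)$; equivalently $c_\gamma\cdot{}^\gamma h\cdot c_\gamma^{-1}=h$ for all $h\in G_0$ and $\gamma\in\Gamma$, and it suffices to impose this for the generators $r=[1,2,1,2]=\operatorname{diag}(\epsilon_3,\epsilon_3^2,\epsilon_3,\epsilon_3^2)$ and $s=(1234)$. The heart of the argument is to read off the resulting constraints on $c$. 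Since $\Gamma$ fixes $S_4$ pointwise, the condition at $s$ forces each $c_\gamma$ to centralize $(1234)$; a short computation gives $C_{\Aut(\overline{X_0})}((1234))=\langle(1234)\rangle\cong C_4$. The values of $c$ therefore lie in the $\Gamma$-fixed cyclic group $\langle(1234)\rangle$, so the cocycle identity degenerates to the statement that $c\colon\Gamma\to\langle(1234)\rangle\cong C_4$ is a homomorphism. The condition at $r$ is then handled using ${}^\gamma r=r$ for $\gamma\in H:=\Gal(\bar k/k(\epsilon_3))$ and ${}^\gamma r=r^{-1}$ otherwise, together with the fact that $(1234)^m$ centralizes $r$ when $m$ is even and inverts it when $m$ is odd: this forces $c_\gamma\in\{1,(13)(24)\}$ for $\gamma\in H$ and $c_\gamma\in\{(1234),(1432)\}$ for $\gamma\notin H$. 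Consequently $c$ is a surjective homomorphism $\Gamma\to C_4$ whose restriction to the index-two subgroup $H$ has image the unique subgroup of order $2$ in $C_4$.

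Equivalently, $c$ corresponds to a cyclic quartic extension $M/k$ whose unique quadratic subextension is $k(\epsilon_3)=k(\sqrt{-3})$. By the classical criterion of Witt --- a quadratic extension $k(\sqrt a)$ embeds into a cyclic degree-four extension of $k$ if and only if $a$ is a sum of two squares in $k$ --- the existence of $M$ forces $-3$ to be a sum of two squares in $k$, i.e.\ $x^2+y^2=-3$ has a solution over $k$. This finishes the converse. The only genuine computations are the identification of $C_{\Aut(\overline{X_0})}((1234))$ with $C_4$ and the elementary description of how $\langle(1234)\rangle$ conjugates the diagonal element $r$; everything else is formal once one recognizes that the cocycle constraints amount precisely to an embedding problem for $k(\sqrt{-3})$ into a cyclic quartic extension. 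I expect the main obstacle to be the careful bookkeeping in the cocycle analysis --- reducing to the standard copy $G_0$, checking that the cocycle condition really collapses to a homomorphism $\Gamma\to C_4$, and tracking which elements of $\langle(1234)\rangle$ fix versus invert $r$ --- after which Witt's criterion closes the argument.
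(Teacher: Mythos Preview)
Your proof is correct and follows essentially the same approach as the paper's: reduce to a twist of the Fermat cubic, normalize so that the standard copy $\langle[1,2,1,2],(1234)\rangle$ lies in the twisted automorphism group, compute that the cocycle must be a surjective homomorphism $\Gamma\to\langle(1234)\rangle\cong C_4$ factoring through a $C_4$-extension with intermediate field $k(\epsilon_3)$, and then invoke the embedding criterion (the paper cites Theorem~2.2.5 of \cite{JenLedYui02}, which is the same as your Witt criterion). Your treatment is slightly more explicit in separating the constraints imposed by the two generators $r$ and $s$, but the substance is identical.
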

\begin{proof}
    Example \ref{ex:C3C4} shows that $3\mathsf{C}_2$ is realized when $x^2 + y^2 = -3$ has a solution over $k$. If $\epsilon_3 \in k$, then $3\mathsf{C}_2$ acts on the Fermat cubic surface, and $(1+2\epsilon_3^2) + 0^2 = -3$.

    So we may assume that $\epsilon_3 \not\in k$. Let $\Gamma := \Gal(\bar{k}/k)$. Suppose $3\mathsf{C}_2$ acts on a smooth cubic surface $X$. Since $3\mathsf{C}_2$ contains an element of type $3C$, we know $X$ is projectively equivalent over $\bar{k}$ to the Fermat cubic surface $X_0$. The $k$-forms of $X_0$ are in bijection with $H^1(\bar{k}/k,\Aut(\overline{X_0}))$. Therefore, $X$ is isomorphic to a twist of $X_0$ by some cocycle $c \in Z^1(\bar{k}/k, \Aut(\overline{X_0}))$. Note that if $G_1$ and $G_2$ are conjugate subgroups of $\Aut(\overline{X_0})$ and $G_1 \subseteq ({}_c \Aut(\overline{X_0}))^{\Gamma}$ for a cocycle $c \in Z^1(\bar{k}/k,\Aut(\overline{X_0}))$, then there is a cohomologous cocycle $c'$ with $G_2 \subseteq ({}_{c'} \Aut(\overline{X_0}))^{\Gamma}$.
    Without loss of generality we choose the representative $\langle [1,2,1,2], (1234) \rangle$ for $3\mathsf{C}_2$ in $\Aut(\overline{X_0})$.

    The $\Gamma$-action on ${}_c \Aut(\overline{X_0})$ is defined ${}^{\gamma'} g = c_{\gamma} \cdot {}^\gamma g \cdot c_{\gamma}^{-1}$. We calculate that $\langle [1,2,1,2], (1234) \rangle$ is contained in $({}_c \Aut(\overline{X_0}))^{\Gamma}$ if and only if
    \begin{equation*}
        c_{\gamma} \in \begin{cases}
            \{\id, (13)(24)\}, &\text{if } \gamma(\epsilon_3) = \epsilon_3 \\
            \{(1234), (4321)\}, &\text{if } \gamma(\epsilon_3) = \epsilon_3^2
        \end{cases}
    \end{equation*}
    Since $\epsilon_3 \not\in k$ and the original $\Gamma$-action on $\langle (1234) \rangle$ is trivial, $c: \Gamma \to \langle (1234) \rangle$ is a surjective group homomorphism. Thus $\Gamma / \ker(c)$ is isomorphic to $C_4$. Now $\ker(c) = \Gal(\bar{k}/F)$ with $\Gal(\bar{k}/F) \subset \Gal(\bar{k}/k(\epsilon_3))$. We have inclusions $k \subset k(\epsilon_3) \subset F \subset \bar{k}$ with $\Gal(k(\epsilon_3)/k) \cong C_2$ and $\Gal(F/k) \cong C_4$. By Theorem 2.2.5 of \cite{JenLedYui02}, the field $k(\epsilon_3)$ can be embedded in a $C_4$-extension of $k$ if and only if $x^2 + y^2 = -3$ has a solution over $k$.
\end{proof}

Consider an arbitrary semidirect product of groups $A \rtimes B$. By Proposition IV.2.3 of \cite{Bro82}, the splittings $B \xhookrightarrow{} A \rtimes B$ up to conjugacy by an element of $A$ are in bijection with the elements of $H^1(B, A)$.

\begin{lem}
\label{lem:splittings}
Let $C_3^3 := \left\{[a_1,a_2,a_3,a_4] \, | \, a_i \in \mathbb{F}_3 \text{ and } \sum a_i = 0\right\}$ be an $\mathbb{F}_3 S_{4}$-submodule of the permutation action on $\mathbb{F}_3^4$. For any subgroup $H$ of $S_{4}$, we have $H^1(H, C_3^3) = 0$.
\end{lem}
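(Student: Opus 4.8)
The plan is to reduce, by a Sylow argument, to a single small cohomology computation. Since $C_3^3$ is a $3$-torsion module, $H^1(H, C_3^3)$ is a group of exponent dividing $3$; on the other hand, if $P$ is a Sylow $3$-subgroup of $H$, then the composite of the restriction $H^1(H, C_3^3) \to H^1(P, C_3^3)$ with the corestriction is multiplication by $[H:P]$, which is invertible modulo $3$. Hence restriction to $P$ is injective, and it suffices to prove $H^1(P, C_3^3) = 0$ for every $3$-subgroup $P$ of $S_4$. As $|S_4| = 2^3 \cdot 3$, such a $P$ is either trivial, in which case there is nothing to prove, or generated by a $3$-cycle. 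All order-$3$ subgroups of $S_4$ are conjugate, and conjugating by an element of $S_4$ induces an isomorphism on cohomology with coefficients in the $S_4$-module $C_3^3$, so we may assume $P = \langle (123) \rangle$.

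For this remaining case I would observe that $C_3^3$ is free of rank one as a module over the group algebra $\mathbb{F}_3 P$. Concretely, the element $v = [1,0,0,2]$ lies in $C_3^3$, and $v$ together with its two images under $(123)$ is linearly independent over $\mathbb{F}_3$, hence a basis of the three-dimensional space $C_3^3$; thus $C_3^3 \cong \mathbb{F}_3 P$ as $\mathbb{F}_3 P$-modules. Since $\mathbb{F}_3 P = \operatorname{Ind}_{1}^{P} \mathbb{F}_3$ and $P$ is finite, Shapiro's lemma gives $H^1(P, C_3^3) \cong H^1(1, \mathbb{F}_3) = 0$, which completes the proof.

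I do not expect a genuine obstacle here: the only point requiring care is the freeness of $C_3^3$ over $\mathbb{F}_3 P$, and this is transparent once the generator $v$ is written down. (Structurally, the same fact also follows from the $S_4$-equivariant splitting $\mathbb{F}_3^4 \cong C_3^3 \oplus \mathbb{F}_3$, available because $4 \equiv 1 \pmod 3$, so the diagonal line is an $S_4$-stable complement to $C_3^3$, together with the decomposition $\mathbb{F}_3^4|_P \cong \mathbb{F}_3 P \oplus \mathbb{F}_3$ coming from the two $P$-orbits $\{1,2,3\}$ and $\{4\}$ on $\{1,2,3,4\}$; Krull–Schmidt then forces $C_3^3 \cong \mathbb{F}_3 P$.) Either way the computation is short, and the substance of the argument is the standard Sylow reduction.
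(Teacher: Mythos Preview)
Your proof is correct and follows the same overall strategy as the paper: both reduce via a Sylow argument to the case of a $3$-Sylow $P = \langle \sigma \rangle$ with $\sigma$ a $3$-cycle. The only difference is in how the vanishing $H^1(P, C_3^3) = 0$ is verified at the end. The paper computes directly from the cyclic-group formula
\[
H^1(\langle \sigma \rangle, C_3^3) \;=\; \frac{\{a \in C_3^3 \mid (1+\sigma+\sigma^2)a = 0\}}{(\sigma-1)C_3^3}
\]
and checks by hand that this quotient is zero, whereas you instead observe that $C_3^3 \cong \mathbb{F}_3 P$ is free of rank one (via the cyclic vector $v = [1,0,0,2]$, or equivalently via the Krull--Schmidt argument you sketch) and then invoke Shapiro's lemma. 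Your route is slightly more conceptual and avoids any explicit linear algebra in the quotient; the paper's is marginally more self-contained. Either way the computation is a one-liner once the Sylow reduction is in place, so the difference is cosmetic.
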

\begin{proof}
    Recall that $H^n(H,C_3^3)$ admits a primary decomposition
    \begin{equation*}
        H^n(H,C_3^3) = \bigoplus_{p} H^n(H,C_3^3)_{(p)}
    \end{equation*}
    where $p$ ranges over the primes dividing $\abs{H}$. If $A$ is a $p$-Sylow subgroup of $H$, there is an injection $H^n(H,C_3^3)_{(p)} \xhookrightarrow{} H^n(A,C_3^3)$. If $A$ is a $2$-group, then $H^n(A,C_3^3) = 0$ by Corollary III.10.2 of \cite{Bro82}. If $\sigma$ has order $3$ in $S_4$ and $N = 1 + \sigma + \sigma^2$, then 
    \begin{equation*}
    H^1(\langle \sigma \rangle, C_3^3) = \dfrac{\{a \in C_3^3 \mid Na = 0\}}{(\sigma - 1)C_3^3} = 0.
    \end{equation*}
    We conclude that $H^1(H,C_3^3) = 0$.
\end{proof}

\begin{prop}
\label{prop:3C}
Let $k$ be a field of characteristic zero, and let $G$ be a subgroup of $3\mathsf{C}$ up to conjugacy in $W(\mathsf{E}_6)$.
\begin{itemize}
    \item[(i)] Suppose $\epsilon_3 \in k$. Then $G$ is realized over $k$ on the Fermat cubic surface.

    \item[(ii)] Suppose $x^2 + y^2 = -3$ does not have a solution over $k$. Then $G$ is realized over $k$ if and only if $G$ is a subgroup of $4\mathsf{B}$ or $3\mathsf{C}_1$.
    
    \item[(iii)] Suppose $\epsilon_3 \not\in k$, but $x^2 + y^2 = -3$ has a solution over $k$. Then $G$ is realized over $k$ if and only if $G$ is a subgroup of $4\mathsf{B}$, $3\mathsf{C}_1$, or $3\mathsf{C}_2$.
\end{itemize}
\end{prop}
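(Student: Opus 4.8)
The plan is to settle (i) by inspection and to reduce (ii) and (iii) to a cohomological study of the Galois twists of the Fermat cubic $X_0$. Part (i) is immediate: if $\epsilon_3\in k$, every element of $\Aut(\overline{X_0})$ is a coordinate permutation times a diagonal matrix $[a_1,a_2,a_3,a_4]$ with entries $\epsilon_3^{a_i}\in k$, so $\Aut(X_0)=\Aut(\overline{X_0})=3\mathsf{C}$ and every $G\leq 3\mathsf{C}$ acts on $X_0$ over $k$. In (ii) and (iii) we may therefore assume $\epsilon_3\notin k$, since $\epsilon_3\in k$ forces $-3=(1+2\epsilon_3)^2$ and is covered by (i). The ``if'' implications are then short: if $G$ is conjugate into $4\mathsf{B}$, then $4\mathsf{B}\leq 5\mathsf{A}$ by Figure 1 and $G$ acts on the Clebsch cubic by Proposition \ref{prop:5A}; if $G$ is conjugate into $3\mathsf{C}_1$, use Example \ref{ex:C32D8}; and if $G$ is conjugate into $3\mathsf{C}_2$ with $x^2+y^2=-3$ soluble over $k$, use Example \ref{ex:C3C4}.

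For the ``only if'' implication, let $G\leq 3\mathsf{C}$ act by automorphisms on a smooth cubic surface over $k$ with $\epsilon_3\notin k$; by Lemma \ref{lem:elements}(i), $G$ has no element of type $3A$. I would split on whether $G$ contains an element of type $3C$. If it does not, the argument is group-theoretic: $G\leq C_3^3\rtimes S_4$ has all order-$3$ elements of type $3D$. The three $S_4$-orbits of order-$3$ elements of $C_3^3$ are represented by $[1,1,1,0]$ (type $3A$), $[1,2,0,0]$ (type $3D$) and $[1,2,1,2]$ (type $3C$), and a short check shows that every $\mathbb{F}_3$-plane in $C_3^3$ meets the $3A$- or $3C$-loci; hence $G\cap C_3^3$ is trivial or a single line $\langle v\rangle$ of type $3D$. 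If it is trivial, the projection $\pi\colon G\to S_4$ is injective and, since $H^1(\pi(G),C_3^3)=0$ by Lemma \ref{lem:splittings}, $G$ is conjugate to $\pi(G)\leq S_4=4\mathsf{B}$. If $G\cap C_3^3=\langle v\rangle$, then $\langle v\rangle$ is the unique Sylow $3$-subgroup of $G$ (an order-$3$ element with nontrivial $S_4$-component would enlarge $G\cap C_3^3$), so $G$ lies in $N_{3\mathsf{C}}(\langle v\rangle)=C_3^3\rtimes(S_{\{i,j\}}\times S_{\{k,l\}})$ with $\{i,j\}$ the support of $v$; a splitting argument via Lemma \ref{lem:splittings} then conjugates $G$ into $\langle v\rangle\rtimes(S_{\{i,j\}}\times S_{\{k,l\}})\leq 3\mathsf{C}_1$.

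If $G$ does contain an element of type $3C$, then by Lemma 10.14 of \cite{DolDun18} any cubic surface carrying a $G$-action is projectively equivalent over $\bar k$ to $X_0$, so $G$ is conjugate into $A:=\bigl({}_c\Aut(\overline{X_0})\bigr)^{\Gamma}$ for some $c\in Z^1(\bar k/k,\Aut(\overline{X_0}))$, $\Gamma=\Gal(\bar k/k)$. Writing $c(\gamma)=v(\gamma)\cdot\pi(\gamma)$ with $v(\gamma)\in C_3^3$ and $\pi(\gamma)\in S_4$, the cocycle relation makes $\pi\colon\Gamma\to S_4$ a homomorphism, and the Galois action on $\Aut(\overline{X_0})$ factors through $\Gal(k(\epsilon_3)/k)=\langle\tau\rangle$ with $\tau=-1$ on $C_3^3$, trivial on $S_4$; let $\epsilon$ be its sign character. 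A direct computation gives $A\cap C_3^3=W:=\bigcap_\gamma\ker\!\bigl(\epsilon(\gamma)\pi(\gamma)-1\bigr)$, and any order-$3$ element of $A$ with nontrivial $S_4$-component is conjugate in $3\mathsf{C}$ to a $3$-cycle, hence of type $3D$; so the $3C$-element of $G$ lies in $W$. Running over the conjugacy classes of $\pi(\Gamma)\leq S_4$ and the correlation between $\pi$ and $\epsilon$, the presence of a $3C$-vector in $W$ forces either $\pi(\Gamma)=\langle(12)(34)\rangle$ with $\pi=\epsilon$ (then $W\cong C_3^2$) or $\pi(\Gamma)=\langle(1234)\rangle$ with $\pi$ reducing modulo $\langle(1234)^2\rangle$ to $\epsilon$ (then $W\cong C_3$). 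In either case $A$ surjects onto a subgroup of $Z_{S_4}(\pi(\Gamma))$ with kernel $W$, and since $C_3^3/W$ has order prime to $|Z_{S_4}(\pi(\Gamma))|$, the extension is split up to conjugacy and $A$ embeds into $W\rtimes Z_{S_4}(\pi(\Gamma))$ --- which is $3\mathsf{C}_1$ in the first case (Remark \ref{rem:C32D8}) and $3\mathsf{C}_2$ in the second (Example \ref{ex:C3C4}). The second case requires a cyclic quartic extension of $k$ with quadratic subfield $k(\epsilon_3)$, equivalently $x^2+y^2=-3$ soluble over $k$, by Theorem 2.2.5 of \cite{JenLedYui02} as in Proposition \ref{prop:C3C4}. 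Since the proper subgroups of $3\mathsf{C}_2$ with a $3C$-element ($C_3$ and $C_6$) are conjugate into $3\mathsf{C}_1$ while $3\mathsf{C}_2$ itself is not, we obtain $G\leq 4\mathsf{B}$ or $3\mathsf{C}_1$ in case (ii) and $G\leq 4\mathsf{B}$, $3\mathsf{C}_1$ or $3\mathsf{C}_2$ in case (iii), completing the proof.

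I expect the main difficulty to lie in the last case: organizing the classification of Galois twists of $X_0$ that carry a $3C$-automorphism over $k$, and checking the group-theoretic containments --- in particular that $3\mathsf{C}_2$ is not conjugate into $3\mathsf{C}_1$, whereas all smaller $3C$-containing subgroups of $3\mathsf{C}$ are. The explicit computations of $W$ and of $A$ in the two decisive cases, together with the coprimality argument placing $A$ inside $3\mathsf{C}_1$ or $3\mathsf{C}_2$, form the technical heart; Lemma \ref{lem:splittings} is the key input throughout, governing both the ``no $3C$'' case and the reduction of the $C_3^3$-direction of the cocycle.
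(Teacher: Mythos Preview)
Your treatment of (i), the ``if'' directions, and the no-$3C$ branch is essentially correct and close to the paper's argument, though the parenthetical justification that $\langle v\rangle$ is the $3$-Sylow (``an order-$3$ element with nontrivial $S_4$-component would enlarge $G\cap C_3^3$'') is wrong as stated: such an element does not lie in $C_3^3$. The correct reason is the one you in fact give in the next clause: $\langle v\rangle$ is normal in $G$, so $\pi(G)$ lies in the $S_4$-stabilizer of a $3D$-line, which is $S_{\{i,j\}}\times S_{\{k,l\}}$ and has order prime to $3$.

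Where your proof genuinely diverges from the paper is the has-$3C$ branch, and here there is a real gap. The paper classifies $G$ directly by the pair $(G\cap C_3^3,\rho(G))$: when $G\cap C_3^3\cong C_3^2$ one is forced into $3\mathsf{C}_1$; when $G\cap C_3^3=\langle[1,2,1,2]\rangle$ one reads off $\rho(G)\leq\langle(1234),(13)\rangle$, and only the two subcases $\rho(G)=\langle(1234)\rangle$ (giving $3\mathsf{C}_2$) and $\rho(G)=D_8$ (ruled out by a short twist argument) need special attention. You instead attempt to classify the twist $A=({}_c\Aut(\overline{X_0}))^\Gamma$ itself and assert that a $3C$-vector in $W$ forces $\pi(\Gamma)\in\{\langle(12)(34)\rangle,\langle(1234)\rangle\}$. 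This is false: for the normal Klein four-group $\pi(\Gamma)=\langle(12)(34),(13)(24)\rangle$ with $\epsilon$ the character killing $(12)(34)$, one finds $W=\langle[1,1,2,2]\rangle$ of type $3C$; and for $\pi(\Gamma)=D_8=\langle(1234),(13)\rangle$ with $\epsilon$ the character killing $\langle(13),(24)\rangle$, one finds $W=\langle[1,2,1,2]\rangle$. In both missed cases $Z_{S_4}(\pi(\Gamma))$ is small enough that $A$ still lands in $3\mathsf{C}_1$ up to conjugacy, so your conclusion survives, but the enumeration you claim is incomplete and must be carried out in full. Also, the clause ``since $C_3^3/W$ has order prime to $|Z_{S_4}(\pi(\Gamma))|$'' is not the right justification for embedding $A$ into $W\rtimes Z_{S_4}(\pi(\Gamma))$; what you actually need is $H^1(\rho(A),C_3^3)=0$, i.e.\ Lemma~\ref{lem:splittings} again, to conjugate the complement of $W$ in $A$ into $S_4$. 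The paper's direct classification of $G$ avoids this full case analysis and uses the twist machinery only once, to exclude $\rho(G)=D_8$.
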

\begin{proof}
    We let $C_3^3 := \{[a_1,a_2,a_3,a_4] \mid a_i \in \mathbb{F}_3 \text{ and } \sum a_i = 0 \}$ and we identify $3\mathsf{C}$ with $C_3^3 \rtimes S_4$ where $S_4$ acts by permuting coordinates. Let $\rho: C_3^3 \rtimes S_4 \to S_4$ be the projection map. We pick a representative for $G$ in $C_3^3 \rtimes S_4$ and obtain the exact sequence
    \begin{equation*}
        1 \to G \cap C_3^3 \to G \to \rho(G) \to 1
    \end{equation*}
    by restricting $\rho$ to $G$. If $\epsilon_3 \in k$, then $G$ is realized on the the Fermat cubic surface since $\Aut(\overline{X_0}) = \Aut(X_0)$ in this case. 
    
    We assume $\epsilon_3 \not\in k$. Then $G$ does not contain an element of type $3A$ by Lemma \ref{lem:elements}, so $G \cap C_3^3 \neq C_3^3$. Up to conjugacy by $S_4$, the subgroups of $C_3^3$ of order 3 are
    \begin{equation*}
        \langle [1,2,0,0] \rangle, \, \langle [1,1,1,0] \rangle, \, \text{and } \langle [1,2,1,2] \rangle.
    \end{equation*}
    The subgroups isomorphic to $C_3^2$ up to conjugacy are the orthogonal complements of the subgroups of order 3 with respect to the dot product pairing on $C_3^3$. Vectors of the form $[1,2,0,0]$, $[1,1,1,0]$, and $[1,2,1,2]$ up to the action of $S_4$ correspond to elements in $W(\mathsf{E}_6)$ of type $3D$, $3A$, and $3C$ respectively.

    \textbf{Case 1}: Suppose $G \cap C_3^3 \cong C_3^2$. Then up to conjugacy by $S_4$, we may assume $G \cap C_3^3 = \langle[1,2,0,0],[0,0,1,2]\rangle$. Now $\rho(G)$ is contained in the stabilizer of $G \cap C_3^3$ in $S_4$. So $\rho(G)$ is a subgroup of $\langle (12), (34), (13)(24) \rangle \cong D_8$. By the Schur-Zassenhaus theorem, there is a splitting $\rho(G) \xhookrightarrow{} C_3^3 \rtimes \rho(G)$. By Lemma \ref{lem:splittings}, this splitting is unique up to $C_3^3$-conjugacy. So we may assume $G$ is contained in $\langle [1,2,0,0],[0,0,1,2],(12),(34),(13)(24) \rangle$, a representative for $3\mathsf{C}_1$.

    \textbf{Case 2}: Suppose $G \cap C_3^3 \cong C_3$. Since $G$ does not contain an element of type $3A$, we can assume $G \cap C_3^3 = \langle [1,2,0,0] \rangle$ or $G \cap C_3^3 = \langle [1,2,1,2] \rangle$ up to conjugacy by an element of $S_4$. If $G \cap C_3^3 = \langle [1,2,0,0] \rangle$, then $\rho(G)$ is contained in $\langle (12), (34) \rangle$. By Lemma \ref{lem:splittings}, the splitting $\rho(G) \xhookrightarrow{} C_3^3 \rtimes \rho(G)$ is unique up to $C_3^3$-conjugacy, so we may assume $G$ is contained in $\langle [1,2,0,0], (12), (34) \rangle$, which is contained in $3\mathsf{C}_1$. 
    
    Now suppose $G \cap C_3^3 = \langle [1,2,1,2] \rangle$. Then $\rho(G)$ is contained in $\langle (1234), (13) \rangle$. If $\rho(G)$ is not $\langle (1234) \rangle$ or $\langle (1234), (13) \rangle$, then $G$ is conjugate to a subgroup of $3\mathsf{C}_1$. Since $G$ contains an element of type $3C$, any smooth cubic surface over $k$ with an action of $G$ must be a $k$-form of $X_0$. If $\rho(G) = \langle (1234) \rangle$, then we have a representative for $3\mathsf{C}_2$, which is realized if and only if $x^2 + y^2 = -3$ has a solution over $k$ by Proposition \ref{prop:C3C4}. Suppose $\rho(G) = \langle (1234), (13) \rangle$ and $G$ is contained in $\Aut({}_c X_0)$ for a cocycle $c \in Z^1(\bar{k}/k, \Aut(\overline{X_0}))$. Then $\im(c)$ is contained in $\langle (13)(24) \rangle$, the centralizer of $\langle (1234), (13) \rangle$ in $\Aut(\overline{X_0})$. Let $\gamma \in \Gal(\bar{k}/k)$ with $\gamma(\epsilon_3) = \epsilon_3^2$. If either $c_{\gamma} = \id$ or $c_{\gamma} = (13)(24)$, then $c_{\gamma} {}^{\gamma} [1,2,1,2] c_{\gamma}^{-1} = [2,1,2,1]$, a contradiction. So $\langle [1,2,1,2],(1234),(13) \rangle$ is not realized over $k$.

    \textbf{Case 3}: Suppose $G \cap C_3^3$ is trivial. Then $G$ is isomorphic to $\rho(G)$, a subgroup of $S_4$. By Lemma \ref{lem:splittings}, we know $G$ is contained in $\langle (1234), (123) \rangle$ up to $C_3^3$-conjugacy, and $\langle (1234), (123) \rangle$ corresponds to $4\mathsf{B}$ in $W(\mathsf{E}_6)$.
\end{proof}

\subsection{Proof of Theorem \ref{thm:main}} Recall that $\mathcal{P}_{3,k}$ is the collection of conjugacy classes of subgroups of $W(\mathsf{E}_6)$ that act by automorphisms on a smooth cubic surface over $k$. We say a group in $\mathcal{P}_{3,k}$ is maximal if it is maximal with respect to inclusion. We are ready to complete the proof of the main theorem.
\begin{proof}
    Propositions \ref{prop:5A}, \ref{prop:8A}, \ref{prop:12A}, and \ref{prop:3C} describe the possible maximal subgroups in the intersection of $\mathcal{P}_{3,k}$ with the downward closure of $5\mathsf{A}$, $8\mathsf{A}$, $12\mathsf{A}$, or $3\mathsf{C}$ respectively. Collecting the maximal groups from these propositions, it is immediate that the potential maximal groups in $\mathcal{P}_{3,k}$ are those listed in the table. The conditions on $k$ and the surfaces in the final column are also immediate from the propositions and examples.

    The groups $5\mathsf{A}$, $3\mathsf{C}$, $12\mathsf{A}$, and $8\mathsf{A}$ are maximal in $\mathcal{P}_{3,k}$ whenever the corresponding field condition is satisfied. The group $4\mathsf{A}$ is maximal in $\mathcal{P}_{3,k}$ if $i \in k$ but $\epsilon_3,\epsilon_8 \not\in k$. The group $3\mathsf{C}_1$ is maximal when $\epsilon_3 \not\in k$. The group $3\mathsf{C}_2$ is maximal when $x^2 +y^2 = -3$ has a solution over $k$ and $\epsilon_3 \not\in k$.
\end{proof}

\begin{remark}
    Notice that if a subgroup $G$ in $W(\mathsf{E}_6)$ acts on a smooth cubic surface over $\bar{k}$, then there is a field $F$ obtained by at most two quadratic extensions of $k$ and a cubic surface defined over $F$ on which $G$ acts. For quartic del Pezzo surfaces, we only need a single quadratic extension (see Theorem 1.1 of \cite{Smi23}).
\end{remark}

\begin{cor}
\label{cor:elements}
Let $k$ be a field of characteristic zero. Let $g$ be an element of $W(\mathsf{E}_6)$ up to conjugacy. Then $g$ acts by automorphisms on a smooth cubic surface over $k$ if and only if $g$ is one of the classes in the table and $k$ contains the corresponding primitive root of unity.
\begin{center}
    \begin{tabular}{|c|c|c|c|c|c|c|c|c|c|c|c|c|c|c|c|c|}
    \hline
    \textbf{Name} & $1A$ & $2A$ & $2B$ & $3A$ & $3C$ & $3D$ & $4A$ & $4B$ & $5A$ & $6A$ & $6C$ & $6E$ & $6F$ & $8A$ & $9A$ & $12A$ \\
    \hline
    \textbf{Root} & {} & {} & {} & $\epsilon_3$ & {} & {} & $i$ & {} & {} & $\epsilon_3$ & $\epsilon_3$ & {} & {} & $\epsilon_8$ & $\epsilon_3$ & $\epsilon_{12}$ \\
    \hline
    \end{tabular}
\end{center}
If a root of unity is not listed, then the class is realized over any field of characteristic zero.
\end{cor}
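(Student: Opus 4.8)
The plan is to read the corollary off of Theorem~\ref{thm:main}, Lemma~\ref{lem:elements}, and the known classification over $\bar k$. For the necessity direction, if a class $g$ acts on a smooth cubic surface over $k$ then it acts over $\bar k$, so $g$ is one of the sixteen conjugacy classes listed in Table 9.5 of \cite{Dol12} — these are exactly the classes named in the table — and no class outside the table is realized even over $\bar k$. Lemma~\ref{lem:elements} then forces $\epsilon_3\in k$ for each of $3A$, $6A$, $6C$, $9A$, forces $i\in k$ for $4A$, forces $\epsilon_8\in k$ for $8A$, and forces $\epsilon_{12}\in k$ for $12A$, which accounts for every condition in the second row of the table.

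For sufficiency, since $\langle g\rangle$ is cyclic it suffices, by Theorem~\ref{thm:main}, to exhibit for each class $g$ one maximal group from that theorem's table which is realized over $k$ under the stated field condition and which contains a cyclic subgroup of type $g$. I would organize the sixteen classes into four families. First, $1A$, $2A$, $2B$, $3D$, $4B$, $5A$, $6F$ are each realized by an element of $5\mathsf{A}=S_5$ — the two order-$2$ classes by transpositions and double transpositions, $3D$ by a $3$-cycle, $4B$ by a $4$-cycle, $5A$ by a $5$-cycle, and $6F$ by a transposition times a disjoint $3$-cycle — hence realized on the Clebsch cubic over every $k$ by Proposition~\ref{prop:5A}. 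Second, $3C$ and $6E$ occur in the representative $\langle[1,2,0,0],[0,0,1,2],(12),(34),(13)(24)\rangle$ of $3\mathsf{C}_1$: the element $g_1g_2$ of Example~\ref{ex:C32D8} has type $3C$, while $[1,2,1,2]\cdot(13)(24)$ has order $6$ with square $[2,1,2,1]$ of type $3C$ and cube $(13)(24)$, which identifies it as $6E$; both are then realized over every $k$ by Example~\ref{ex:C32D8}. Third, $3A$, $6A$, $6C$, $9A$ all occur in $3\mathsf{C}=C_3^3\rtimes S_4$ (for instance $[1,1,1,0]$ is of type $3A$, $[1,1,0,1]\cdot(123)$ has order $9$ with cube $[2,2,2,0]$ of type $3A$, and suitable order-$6$ elements $v\cdot\sigma$ realize $6A$ and $6C$), so when $\epsilon_3\in k$ all four are realized on the Fermat cubic by Proposition~\ref{prop:3C}(i). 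Finally $4A$ and $8A$ are realized by $g^2$ and $g$, for the generator $g$ of $8\mathsf{A}$ in Proposition~\ref{prop:8A}, on the surface of Equation~\ref{eq:8A} once $i\in k$, respectively $\epsilon_8\in k$; and $12A$ is realized on the surface of Equation~\ref{eq:H33C4} once $\epsilon_{12}\in k$ by Proposition~\ref{prop:12A}(i).

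The only step requiring genuine work is pinning down the $W(\mathsf{E}_6)$-conjugacy class of each concrete element used above, and in particular separating the four order-$6$ classes: $6A$, $6C$, $6E$, $6F$ have squares of types $3A$, $3C$, $3C$, $3D$, so $6C$ and $6E$ must additionally be distinguished by the type of their cube. This is a finite computation carried out by comparing the power maps against Table 4 of \cite{DolDun18} and the element list of Table 9.5 of \cite{Dol12}; a convenient consistency check is that $6C$ cannot be realized over a field lacking $\epsilon_3$, so the order-$6$ element found inside $3\mathsf{C}_1$, which is defined over $\mathbb{Q}$, must be $6E$ rather than $6C$. Granting these routine identifications, the corollary follows directly by reading off Theorem~\ref{thm:main} together with the necessity statements of Lemma~\ref{lem:elements}.
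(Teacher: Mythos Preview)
Your approach is exactly the paper's---the corollary is read off from Table~9.5 of \cite{Dol12}, Lemma~\ref{lem:elements}, and Theorem~\ref{thm:main}---and your organization of the sufficiency direction into four families is a reasonable way to unpack that one-line argument.

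There is, however, a labeling error: you have interchanged the classes $6E$ and $6F$. The stratification in Figure~1 shows that the $6E$ stratum specializes to the $5A$ stratum, so the Clebsch surface carries an automorphism of type $6E$; since $S_5$ has a single conjugacy class of order~$6$, the element $(12)(345)\in 5\mathsf{A}$ is of type $6E$, not $6F$, and its square (a $3$-cycle, type $3D$) shows that $6E$ squares to $3D$. Consequently the element $[1,2,1,2]\cdot(13)(24)\in 3\mathsf{C}_1$, whose square $[2,1,2,1]$ is of type $3C$, must be of type $6F$ rather than $6E$: your consistency check correctly rules out $6C$, but the remaining class with square $3C$ is $6F$, not $6E$. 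The correct power-map data are $6A\to 3A$, $6C\to 3C$, $6E\to 3D$, $6F\to 3C$. This swap is harmless for the corollary itself---both $6E$ and $6F$ still land in groups realized over every field (indeed $3\mathsf{C}_1$ contains elements of both types, e.g.\ $[0,0,1,2]\cdot(12)$ has square $[0,0,2,1]$ of type $3D$)---but the specific identifications you wrote down should be corrected.
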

\begin{proof}
    This follows immediately from Table 9.5 of \cite{Dol12}, Lemma \ref{lem:elements}, and Theorem \ref{thm:main}.
\end{proof}

\section{Rationality}
\label{sec:rat}

For any field $k$ of characteristic zero, let $\Gamma$ be $\Gal(\bar{k} / k)$. Let $X$ be a smooth cubic surface over $k$. Recall that the action of $\Gamma$ on $\Pic \overline{X}$ induces a map $\rho : \Gamma \to W(\mathsf{E}_6)$. We let $\overline{\Gamma}$ be the image of $\Gamma$ under this map. The $\Gamma$-orbits of the lines on $\overline{X}$ are useful in determining the rationality of $X$. 

Any $\Gamma$-orbit of skew lines on $\overline{X}$ can be blown down to obtain a birational morphism defined over $k$ to a del Pezzo surface of degree $3 + n$, where $n$ is the size of the orbit of lines. Moreover, if $X$ is a del Pezzo surface of degree $d \geq 5$, then the $k$-rationality of $X$ is equivalent to $X(k) \neq \emptyset$ by Theorem 29.4 of \cite{Man86}. P. Swinnerton-Dyer showed that every del Pezzo surface of degree five has a $k$-point \cite{SD70}, from which we can conclude that every del Pezzo surface of degree five is $k$-rational.

A surface $X$ over $k$ is \textit{minimal} if every birational morphism $f: X \to X'$ is necessarily an isomorphism. A del Pezzo surface $X$ is minimal if there are no $\Gamma$-orbits of skew lines on $\overline{X}$. Combining results of B. Segre \cite{Seg51}, Y. Manin \cite{Man66,Man67}, and V. Iskovskikh \cite{Isk72}, one can ascertain that every minimal del Pezzo surface of degree $d \leq 4$ is not $k$-rational; see Theorem 3.3.1 of \cite{ManTsf86}.

Combining Theorem 29.4 and Theorem 30.1 of \cite{Man86}, we also obtain a characterization for the $k$-unirationality of del Pezzo surfaces of degree $d \geq 3$. A del Pezzo surface $X$ of degree $d \geq 3$ is $k$-unirational if and only if $X(k) \neq \emptyset$. For additional results related to rationality, the reader should consult \cite{Man86}, but the results we have listed here are sufficient for our purposes.

\begin{lem}
\label{lem:twolines}
For any field $k$, a smooth cubic surface over $k$ that contains two skew lines defined over $k$ is $k$-rational.
\end{lem}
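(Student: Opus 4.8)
The plan is to use the birational geometry already set up above: a pair of skew lines defined over $k$ can be contracted \emph{over $k$}, landing on a del Pezzo surface of degree five, which is always $k$-rational.

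First I would take the two skew lines $\ell_1,\ell_2 \subset X$, both defined over $k$. Then $\{\ell_1,\ell_2\}$ is a $\Gamma$-stable collection of pairwise skew exceptional curves on $\overline{X}$ (each $\ell_i$ is $\Gamma$-stable because it is defined over $k$). By the discussion preceding the lemma, the simultaneous blow-down of this collection is a birational morphism $f\colon X \to X'$ defined over $k$, where $X'$ is a del Pezzo surface of degree $3 + 2 = 5$ over $k$. (That the contraction lands on a del Pezzo surface is the standard fact that contracting a set of pairwise disjoint $(-1)$-curves on a del Pezzo surface yields a del Pezzo surface.) Second, I would invoke the rationality input recorded just above: by Swinnerton-Dyer's theorem \cite{SD70} every del Pezzo surface of degree five over $k$ has a $k$-point, and by Theorem 29.4 of \cite{Man86} a del Pezzo surface of degree $\geq 5$ with a $k$-point is $k$-rational. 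Hence $X'$ is $k$-rational, and since $f$ is a birational map defined over $k$, $X$ is $k$-rational as well.

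There is essentially no obstacle here: the only points to verify are that the contraction can be carried out over $k$ (immediate from the $\Gamma$-stability of $\{\ell_1,\ell_2\}$) and that the target is again del Pezzo (standard). If one wished to avoid quoting the degree-five rationality at all, there is a self-contained alternative: for a general point $p \in X$ there is a unique line $L_p$ through $p$ meeting both $\ell_1$ and $\ell_2$ (obtained by intersecting $\ell_2$ with the plane $\langle p,\ell_1\rangle$ and $\ell_1$ with the plane $\langle p,\ell_2\rangle$), and $p \mapsto (L_p \cap \ell_1,\, L_p \cap \ell_2)$ defines a birational map $X \dashrightarrow \ell_1 \times \ell_2 \cong \mathbb{P}^1_k \times \mathbb{P}^1_k$ over $k$, exhibiting $X$ as $k$-rational directly. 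Given the tools already developed in this section, I would present the blow-down argument as the main proof and perhaps note the projection construction as a remark.
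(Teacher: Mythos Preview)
Your proof is correct and follows exactly the paper's approach: blow down the two skew $k$-lines to a quintic del Pezzo surface, which is $k$-rational by the results of Swinnerton-Dyer and Manin already recalled at the start of the section. The only addition is your alternative projection argument to $\ell_1 \times \ell_2$, which the paper does not include.
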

\begin{proof}
    Let $X$ be a smooth cubic surface over $k$ with two skew lines defined over $k$. Blowing down along both lines, we obtain a birational map defined over $k$ to a quintic del Pezzo surface, and the quintic del Pezzo surface is $k$-rational.
\end{proof}

\begin{prop}
\label{prop:centralizer}
Suppose $X$ is a smooth cubic surface with $\Aut(X) = \Aut(\overline{X})$. If $\overline{\Gamma}$ is abelian and contained in $\Aut(X)$, as subgroups of $W(\mathsf{E}_6)$, then there exists a $k$-rational smooth cubic surface $Y$ with $\Aut(Y) = \Aut(X)$.
\end{prop}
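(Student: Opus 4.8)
The plan is to twist $X$ by an explicit $1$-cocycle with values in $\Aut(\overline X)$ so that the Galois action on $\Pic\overline X$ becomes trivial while the automorphism group is unchanged; the resulting surface $Y$ then has all $27$ lines defined over $k$ and is $k$-rational by Lemma \ref{lem:twolines}. Write $A := \tau(\Aut(\overline X)) \subseteq W(\mathsf{E}_6)$, so $A = \tau(\Aut(X))$ by hypothesis. The Galois action on $\overline X$ induces the action $\rho$ on $\Pic\overline X$ (with image $\overline\Gamma$) and an action on the finite group $\Aut(\overline X)$, and since both come from conjugating by the semilinear map $\gamma$ on $\overline X$ they satisfy $\tau({}^{\gamma}f) = \rho(\gamma)\,\tau(f)\,\rho(\gamma)^{-1}$ for $f \in \Aut(\overline X)$. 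Now $\Aut(X) = \Aut(\overline X)^{\Gamma} = \Aut(\overline X)$ means the action on $\Aut(\overline X)$ is trivial, which via the displayed compatibility says $\rho(\gamma)$ centralizes $A$ for all $\gamma$; together with $\overline\Gamma \subseteq A$ this forces $\overline\Gamma \subseteq Z(A)$ (so the hypothesis that $\overline\Gamma$ is abelian is in fact automatic here).

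Next I would construct the twist. As $\overline\Gamma \subseteq A = \tau(\Aut(\overline X))$ and $\tau$ is injective, each $\rho(\gamma)$ is $\tau(a_\gamma)$ for a unique $a_\gamma \in \Aut(\overline X)$, and $\gamma \mapsto a_\gamma$ is a homomorphism with image $\tau^{-1}(\overline\Gamma) \subseteq Z(\Aut(\overline X))$. Hence $\gamma \mapsto d_\gamma := a_\gamma^{-1}$ is also a homomorphism, and since the Galois action on $\Aut(\overline X)$ is trivial we have $Z^1(\Gamma,\Aut(\overline X)) = \Hom(\Gamma,\Aut(\overline X))$, so $d$ is a $1$-cocycle. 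Let $Y := {}_d X$ be the twist; it is a smooth cubic surface over $k$ with $\overline Y \cong \overline X$. Using the standard formulas for twisting by a cocycle valued in $\Aut(\overline X)$, the Galois action for $Y$ on $\Pic\overline Y$ is $\gamma \mapsto \tau(d_\gamma)\circ\rho(\gamma) = \rho(\gamma)^{-1}\circ\rho(\gamma) = \id$, so $\Gamma$ acts trivially on $\Pic\overline Y$ and all $27$ lines of $\overline Y$ --- in particular a pair of skew ones --- are defined over $k$; by Lemma \ref{lem:twolines}, $Y$ is $k$-rational. Likewise the Galois action for $Y$ on $\Aut(\overline Y) = \Aut(\overline X)$ is $f \mapsto d_\gamma f d_\gamma^{-1}$, which is trivial because $d_\gamma$ is central; thus $\Aut(Y) = \Aut(\overline Y) = \Aut(\overline X)$ and $\tau(\Aut(Y)) = A = \tau(\Aut(X))$, so $\Aut(Y) = \Aut(X)$ as subgroups of $W(\mathsf{E}_6)$.

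The substance of the argument is the compatibility $\tau({}^{\gamma}f) = \rho(\gamma)\tau(f)\rho(\gamma)^{-1}$: it is what pushes $\overline\Gamma$ into the center of $A$ and, simultaneously, what makes the twist by $d$ preserve the automorphism group. Everything else is formal --- the description of how the induced actions on $\Pic\overline X$ and on $\Aut(\overline X)$ change under a twist (cf.\ Section III.1 of \cite{Ser97}), the fact that a cubic surface with Galois-fixed Picard lattice has all its lines rational, and the reduction of $k$-rationality to two skew $k$-lines from Lemma \ref{lem:twolines}. The point requiring the most care is keeping the left/right conventions in the twisting formulas consistent so that the cancellation $\tau(d_\gamma)\circ\rho(\gamma) = \id$ is valid.
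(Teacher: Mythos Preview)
Your proof is correct and follows essentially the same route as the paper: twist $X$ by the cocycle $\gamma \mapsto \bar\gamma^{-1}$ (using that the trivial $\Gamma$-action on $\Aut(\overline X)$ makes cocycles into homomorphisms), observe that the twisted Galois action on the lines becomes trivial so the twist is $k$-rational, and use that $\overline\Gamma$ centralizes $\Aut(X)$ to conclude the automorphism group is preserved. Your write-up is more explicit than the paper's about the compatibility $\tau({}^{\gamma}f) = \rho(\gamma)\,\tau(f)\,\rho(\gamma)^{-1}$, and you correctly note as a byproduct that the hypothesis ``$\overline\Gamma$ is abelian'' is already implied by $\Aut(X)=\Aut(\overline X)$ together with $\overline\Gamma\subseteq\Aut(X)$.
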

\begin{proof}
    We construct a $k$-form $Y$ that is $k$-rational with $\Aut(Y) = \Aut(X)$. Since the $\Gamma$-action on $\Aut(\overline{X})$ is trivial, $Z^1(\bar{k}/k, \Aut(\overline{X})) = \Hom(\Gamma, \Aut(\overline{X}))$. We define a cocycle $c: \Gamma \to \Aut(\overline{X})$ by $c_{\gamma} = \bar{\gamma}^{-1}$ where $\bar{\gamma}$ is the image of $\gamma$ in $\overline{\Gamma}$. Notice that $c$ is well-defined since $\overline{\Gamma}$ is abelian and contained in $\Aut(X)$. Let $Y$ be the surface obtained by twisting $X$ by $c$. The $\Gamma$-action on the lines of $Y$ is then ${}^{\gamma'} L = c_{\gamma} \cdot {}^{\gamma} L = \bar{\gamma}^{-1}(\bar{\gamma}(L)) = L$. Since the $\Gamma$-action on the lines of $Y$ is trivial, $Y$ is $k$-rational. Since $\overline{\Gamma}$ is in the centralizer of $\Aut(X)$ in $W(\mathsf{E}_6)$, we have $\Aut(Y) = \Aut(X)$.
\end{proof}

Recall that $X_0$ denotes the Fermat cubic surface $V(x_0^3 + x_1^3 + x_2^3 + x_3^3)$. We describe the 27 lines of $\overline{X_0}$ as follows. We group the 27 lines of $\overline{X_0}$ into the following three sets:
\begin{equation*}
    \underbrace{\{x_0 + \epsilon_3^n x_1 = x_2 + \epsilon_3^m x_3 = 0\}}_0, \, \underbrace{\{x_0 + \epsilon_3^n x_2 = x_1 + \epsilon_3^m x_3 = 0\}}_1, \, \underbrace{\{x_0 + \epsilon_3^n x_3 = x_1 + \epsilon_3^m x_2 = 0\}}_2
\end{equation*}
The lines of $\overline{X_0}$ are then given by $\{L_{i,n,m} \mid i,n,m \in \mathbb{F}_3\},$ where $i$ denotes the set to which the line belongs. If $k$ does not contain $\epsilon_3$, then the $\Gamma$-action on the lines of $\overline{X_0}$ is determined by the rule
\begin{equation*}
    {}^{\gamma} L_{i,n,m} = \begin{cases}
        L_{i,n,m} &\text{ if } \gamma(\epsilon_3) = \epsilon_3 \\
        L_{i,2n,2m} &\text{ if } \gamma(\epsilon_3) = \epsilon_3^2
    \end{cases}
\end{equation*}
for all $\gamma$ in $\Gamma$. If $k$ does contain $\epsilon_3$, then the $\Gamma$-action on the lines of $\overline{X_0}$ is trivial. We record conditions for two lines $L_{i,n,m}$ and $L_{i',n',m'}$ to intersect:
\begin{itemize}
    \item[(i)] If $i = i'$, then $n = n'$ or $m = m'$.

    \item[(ii)] If $i = 0$ and $i' = 1$, then $m' - m + n - n' = 0$ in $\mathbb{F}_3$.

    \item[(iii)] If $i = 0$ and $i' = 2$, then $m' + m + n - n' = 0$ in $\mathbb{F}_3$.

    \item[(iv)] If $i = 1$ and $i' = 2$, then $m' - m + n' - n = 0$ in $\mathbb{F}_3$.
\end{itemize}
We can now quickly determine if a $\Gamma$-orbit of lines is pairwise skew.

\begin{prop}
\label{prop:Dic12rationality}
Let $k$ be a field of characteristic zero. If $k$ does not contain $\epsilon_3$, then any smooth cubic surface over $k$ on which $3\mathsf{C}_2$ acts by automorphisms is neither $k$-rational nor stably $k$-rational.
\end{prop}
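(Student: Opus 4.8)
The plan is to show that any smooth cubic surface $X$ over $k$ carrying a $3\mathsf{C}_2$-action is $k$-birational to a quartic del Pezzo surface which, by the companion result \cite{Smi23}, is neither $k$-rational nor stably $k$-rational. By Proposition \ref{prop:C3C4}, the existence of such an $X$ forces $x^2+y^2=-3$ to have a solution over $k$, so we may assume this (otherwise the statement is vacuous). Since $3\mathsf{C}_2$ contains an element of type $3C$, $\overline{X}$ is projectively equivalent to the Fermat cubic $\overline{X_0}$, so $X = {}_c X_0$ for some $c \in Z^1(\bar{k}/k, \Aut(\overline{X_0}))$, and after replacing $c$ by a cohomologous cocycle I may assume $\langle [1,2,1,2],(1234)\rangle \subseteq \Aut(X)$. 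Exactly as in the proof of Proposition \ref{prop:C3C4}, and using $\epsilon_3 \notin k$, this forces $c$ to be a surjective homomorphism $\Gamma \to \langle (1234)\rangle \cong C_4$. Hence the image $\overline{\Gamma}$ of $\Gamma$ in $W(\mathsf{E}_6)$ is cyclic of order $4$, generated by $g := (1234)\cdot\iota$, where $\iota$ is the involution of the $27$ lines given by $L_{i,n,m}\mapsto L_{i,2n,2m}$ coming from the Fermat Galois action (one checks that $(1234)$ and $\iota$ commute and that $g$ has order $4$, with $g^2$ equal to the class of $(13)(24)$).

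Next I would analyze the $\langle g\rangle$-orbits on the $27$ lines, using the description of the $L_{i,n,m}$ and the incidence rules (i)--(iv) above. A direct check of the orbits shows that $L_{1,0,0}$ is the unique $\Gamma$-fixed line and that no $\Gamma$-orbit of two or more lines is pairwise skew; consequently the only nonempty $\Gamma$-stable set of pairwise disjoint lines on $X$ is $\{L_{1,0,0}\}$. Since $3\mathsf{C}_2 = \langle[1,2,1,2],(1234)\rangle$ also fixes $L_{1,0,0}$, contracting it produces a quartic del Pezzo surface $Y$ over $k$ on which $3\mathsf{C}_2$ acts (now as a subgroup of $W(\mathsf{D}_5)$), and $X$ is the blow-up of $Y$ at a $k$-rational point, so $X$ and $Y$ are $k$-birational.

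Finally I would identify the pair given by the image of $3\mathsf{C}_2$ in $W(\mathsf{D}_5)$ together with the induced Galois image in $W(\mathsf{D}_5)$ with one of the two group actions singled out in \cite{Smi23} --- the quartic del Pezzo group actions that, precisely when $\epsilon_3 \notin k$ and $x^2+y^2=-3$ is solvable over $k$, are not realized on a $k$-rational or stably $k$-rational quartic del Pezzo surface. This yields that $Y$, and therefore $X$, is neither $k$-rational nor stably $k$-rational. I expect this last identification to be the main obstacle: one must transport both the $\Dic_{12}$-action and the Galois action through the contraction of $L_{1,0,0}$ and check that the resulting subgroups of $W(\mathsf{D}_5)$ are the ``bad'' ones of \cite{Smi23}, in particular that the Galois action on $\overline{Y}$ is the non-rational configuration rather than a more split specialization. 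An alternative that sidesteps the quartic reduction is to compute the stable birational invariant $H^1(\overline{\Gamma}, \Pic \overline{X}) = H^1(C_4, \mathbb{Z}^7)$ directly for the explicit $C_4$-module $\Pic \overline{X_0}$ and verify that it is nonzero; since this group vanishes for every stably $k$-rational (in particular every $k$-rational) geometrically rational surface, that also completes the proof, at the cost of an explicit lattice computation.
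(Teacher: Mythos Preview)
Your approach is essentially identical to the paper's: reduce to a twist of the Fermat cubic via the cocycle constraints of Proposition~\ref{prop:C3C4}, compute the $\Gamma$-orbits on the $27$ lines, observe that $L_{1,0,0}$ is the unique fixed line (and the only orbit of pairwise skew lines), contract it equivariantly to a quartic del Pezzo surface $Y$ with a $\Dic_{12}$-action, and invoke \cite{Smi23}.

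Your worry about the ``last identification'' is unnecessary. The paper does not track the induced Galois image in $W(\mathsf{D}_5)$ at all; it simply cites Propositions~6.2 and~6.5 of \cite{Smi23}, which assert that \emph{any} quartic del Pezzo surface over $k$ admitting a $\Dic_{12}$-action, when $\epsilon_3\notin k$, is neither $k$-rational nor stably $k$-rational. So the only datum that needs to survive the contraction is the $\Dic_{12}$-action itself, which follows because $3\mathsf{C}_2$ fixes $L_{1,0,0}$. Your alternative route via $H^1(\overline{\Gamma},\Pic\overline{X})$ would also work and is more self-contained, but the paper does not take it.
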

\begin{proof}
    Let $X$ be a smooth cubic surface on which $3\mathsf{C}_2$ acts by automorphisms. Since $3\mathsf{C}_2$ contains an element of type $3C$, we know $X$ is a $k$-form of $X_0$. Without loss of generality, we pick the representative $\langle [1,2,1,2], (1234) \rangle$ for $3\mathsf{C}_2$ in $\Aut(\overline{X_0})$. By the proof of Proposition \ref{prop:C3C4}, $X$ is a twist of $X_0$ by a cocycle $c \in Z^1(\bar{k}/k, \Aut(\overline{X_0}))$ with
    \begin{equation*}
        c_{\gamma} \in \begin{cases}
            \{\id, (13)(24)\}, &\text{if } \gamma(\epsilon_3) = \epsilon_3 \\
            \{(1234), (4321)\}, &\text{if } \gamma(\epsilon_3) = \epsilon_3^2
        \end{cases}
    \end{equation*}
    and $\im(c) = \langle (1234) \rangle$. The $\Gamma$-action on the lines of $\overline{X}$ is given by ${}^{\gamma'} L = c_{\gamma} \cdot {}^{\gamma} L$. The orbits of this action are
    \begin{align*}
        &\{L_{1,0,0}\}, \, \{L_{0,0,0}, L_{2,0,0}\}, \, \{L_{0,1,1}, L_{2,1,2}\}, \, \{L_{0,2,2}, L_{2,2,1}\}, \, \{L_{1,0,1}, L_{1,1,0}, L_{1,0,2}, L_{1,2,0}\}, \\
        &\{L_{0,1,2}, L_{0,2,1}, L_{2,2,2}, L_{2,1,1}\}, \, \{L_{1,1,1}, L_{1,1,2}, L_{1,2,2}, L_{1,2,1}\}, \\
        &\{L_{0,0,1}, L_{2,1,0}, L_{0,1,0}, L_{2,0,2}\}, \, \{L_{0,0,2}, L_{2,2,0}, L_{0,2,0}, L_{2,0,1}\}
    \end{align*}
    One checks that each orbit contains a pair of intersecting lines except for the unique fixed line $L_{1,0,0}$. Moreover, $L_{1,0,0}$ is fixed by $\langle [1,2,1,2], (1234) \rangle$, so blowing down along $L_{1,0,0}$ yields a $k$-birational map to a quartic del Pezzo surface $Y$ over $k$, and $\Aut(Y)$ contains $\Dic_{12}$. The groups that act by automorphisms on quartic del Pezzo surfaces are studied in \cite{Smi23}. By Propositions 6.2 and 6.5 of \cite{Smi23}, $Y$ must not be $k$-rational or stably $k$-rational. Thus, $X$ is not $k$-rational or stably $k$-rational.
\end{proof}

\begin{remark}
    The proof of Proposition \ref{prop:Dic12rationality} suggests another method to construct a smooth cubic surface with an action of $3\mathsf{C}_2$. Suppose $\epsilon_3 \not\in k$ and suppose there exist $\alpha,\beta \in k$ with $\alpha^2 + \beta^2 = -3$. By Example 5.4 of \cite{Smi23}, the quartic del Pezzo surface $X$ obtained by intersecting the following quadrics
    \begin{align*}
        q_1 &= u_0^2 - 8u_1^2 + u_2^2 + 2u_3^2 + 2(1-\alpha)u_3u_4 - (\alpha + 1)u_4^2 \\
        q_2 &= 2u_0^2 - 4u_1^2 - u_2^2 + (\alpha + 1)u_3^2 + 4u_3u_4 + (1-\alpha)u_4^2
    \end{align*}
    in $\mathbb{P}_k^4$ has automorphisms
    \begin{align*}
        &g_1: (u_0:...:u_4) \mapsto \left(u_2:\dfrac{1}{2}u_0: 2u_1: u_4: -u_3 - u_4 \right) \\
        &g_2: (u_0:...:u_4) \mapsto \left(2u_1: \dfrac{1}{2}u_0: u_2: \dfrac{-\alpha - 1}{\beta}u_3 - \dfrac{2}{\beta}u_4: \dfrac{\alpha - 1}{\beta}u_3 + \dfrac{\alpha + 1}{\beta}u_4 \right).
    \end{align*}
    Notice that $\langle g_1, g_2 \rangle$ is isomorphic to $\Dic_{12}$ and fixes the point $(2:1:2:0:0)$ on $X$. None of the exceptional curves of $\overline{X}$ pass through $(2:1:2:0:0)$, so blowing up $(2:1:2:0:0)$, we obtain a smooth cubic surface $Y$ over $k$ with $\Dic_{12} \xhookrightarrow{} \Aut(Y)$. Since there is a unique conjugacy class of subgroups isomorphic to $\Dic_{12}$ in $W(\mathsf{E}_6)$, the surface $Y$ admits an action of $3\mathsf{C}_2$.
\end{remark}

\subsection*{Proof of Theorem \ref{thm:rationality}} We are ready to prove the second main theorem.
\begin{proof}
    (i) We first show that if $G$ is not conjugate to $3\mathsf{C}_2$, then $G$ acts on a $k$-rational cubic surface. It suffices to show that every group apart from $3\mathsf{C}_2$ in the table of Theorem \ref{thm:main} acts on a $k$-rational surface when the appropriate condition on $k$ is satisfied. 

    \textbf{Case 1}: Suppose $G$ is $5\mathsf{A}$. Then $G$ acts on the Clebsch cubic surface given by Equation \ref{eq:clebsch}. The Clebsch cubic surface contains the lines $l_1 = (a:-a:b:-b)$ and $l_2 = (0:a:b:-a)$. These two lines are skew and defined over $k$, so the Clebsch cubic surface is $k$-rational by Lemma \ref{lem:twolines}.

    \textbf{Case 2}: Suppose $G$ is $3\mathsf{C}$. Then $\epsilon_3 \in k$, and $G$ acts on the Fermat cubic surface $X_0$. Since $\epsilon_3 \in k$, all 27 of the lines on $\overline{X_0}$ are defined over $k$, so $X_0$ is $k$-rational.

    \textbf{Cases 3 and 4}: Suppose $G$ is $12\mathsf{A}$ or $8\mathsf{A}$. If $G$ acts by automorphisms on $X$, then $\Aut(X) = \Aut(\overline{X})$. We compute that the centralizer of $\Aut(X)$ in $W(E_6)$ is abelian and contained in $\Aut(X)$, so $\overline{\Gamma}$ is abelian and contained in $\Aut(X)$ in either case. By Proposition \ref{prop:centralizer}, there exists a $k$-rational smooth cubic surface with an action of $G$.

    \textbf{Case 5}: Suppose $G$ is $4\mathsf{A}$. Then $i \in k$, and by Lemma 11.4 of \cite{DolDun18}, $G$ acts by automorphisms via the diagonal matrix $[i,-1,1,1]$ on any surface of the form
    \begin{equation}
        x_0^2x_1 + x_1^2x_3 + x_2^3 - (1 + \alpha)x_2^2x_3 + \alpha x_2x_3^2 = 0
    \end{equation}
    with $\alpha \in k$. The surface is smooth if and only if $\alpha$ is not equal to $0$ or $1$. Any smooth surface in the family contains the line $l_1: \{x_1 = x_2 - x_3 = 0\}$ defined over $k$ as well as the line
    \begin{equation*}
        l_2: \left(1, t, \dfrac{1-\sqrt{\alpha}}{\alpha-1} t, \sqrt{\dfrac{\sqrt{\alpha} + 1}{\alpha}} + \dfrac{\alpha - \sqrt{\alpha}}{\alpha^2-\alpha}t \right)
    \end{equation*}
    described on the affine chart $x_0 \neq 0$. Setting $\alpha = 9$, the lines $l_1$ and $l_2$ are skew and defined over $k$. By Lemma \ref{lem:twolines}, the surface is $k$-rational.

    \textbf{Case 6}: Suppose $G$ is $3\mathsf{C}_1$. Appealing to Remark \ref{rem:C32D8}, $G$ acts on the twisted surface ${}_c X_0$ where $c \in Z^1(\bar{k}/k,\Aut(\overline{X_0}))$ is defined by 
    \begin{equation*}
        c_{\gamma} = \begin{cases}
            \id, &\text{ if } \gamma(\epsilon_3) = \epsilon_3 \\
            (12)(34), &\text{ if } \gamma(\epsilon_3) = \epsilon_3^2
        \end{cases}
    \end{equation*}
    The twisted $\Gamma$-action on the lines of $\overline{{}_c X_0}$ is defined ${}^{\gamma'} L = c_{\gamma} \cdot {}^\gamma L$. The lines corresponding to $L_{0,0,0}$ and $L_{0,1,2}$ on the twisted surface are skew and fixed by the twisted $\Gamma$-action. We conclude that ${}_c X_0$ is $k$-rational by Lemma \ref{lem:twolines}. 
    
    (ii) If $k$ contains $\epsilon_3$, then $3\mathsf{C}_2$ acts on the $k$-rational Fermat cubic surface. (iii) If $x^2 + y^2 = -3$ does not have a solution over $k$, then $3\mathsf{C}_2$ does not act on any smooth cubic surface over $k$ by Theorem \ref{thm:main}. The fact that the conditions (i), (ii), and (iii) force $G$ not to act on a $k$-rational or stably $k$-rational smooth cubic surface is the content of Proposition \ref{prop:Dic12rationality}.
\end{proof}

\begin{cor}
\label{cor:unirational}
Let $k$ be a field of characteristic zero. Every subgroup of $W(\mathsf{E}_6)$ that acts on a smooth cubic surface over $k$ acts on a $k$-unirational smooth cubic surface over $k$.
\end{cor}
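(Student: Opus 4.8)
The plan is to lean on the criterion recalled in Section~\ref{sec:rat}: a del Pezzo surface of degree $d\ge 3$ over $k$ is $k$-unirational if and only if it has a $k$-rational point. Since a $k$-rational surface is trivially $k$-unirational (a birational map $\mathbb{P}^2_k\dashrightarrow X$ already witnesses it), Theorem~\ref{thm:rationality} settles the claim for every subgroup $G$ of $W(\mathsf{E}_6)$ acting on a smooth cubic surface over $k$, with a single exception: the group $3\mathsf{C}_2$ under the hypotheses that $\epsilon_3\notin k$ and that $x^2+y^2=-3$ has a solution over $k$. (When these fail, either $3\mathsf{C}_2$ acts on the $k$-rational Fermat cubic, or it does not act on any smooth cubic surface at all; and every proper subgroup of $3\mathsf{C}_2$ acts on a $k$-rational surface by Theorem~\ref{thm:rationality}.) So the whole corollary reduces to producing one smooth cubic surface over $k$ that carries a $3\mathsf{C}_2$-action and has a $k$-point.

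For that I would reuse the surface analyzed in the proof of Proposition~\ref{prop:Dic12rationality}: a twist $X = {}_c X_0$ of the Fermat cubic on which the representative $\langle[1,2,1,2],(1234)\rangle$ of $3\mathsf{C}_2$ acts. The orbit computation there shows that $L_{1,0,0}$ is a singleton $\Gamma$-orbit, hence descends to a line on $X$ defined over $k$. Such a line is $k$-isomorphic to $\mathbb{P}^1_k$, so it certainly has $k$-rational points; therefore $X(k)\ne\emptyset$, and the unirationality criterion gives that $X$ is $k$-unirational. Since $3\mathsf{C}_2$ acts on $X$, this finishes the proof. (Alternatively one could exhibit a $k$-point directly on the explicit model of Equation~\ref{eq:C3C4}, but going through the fixed line avoids any calculation.)

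I do not expect a genuine obstacle here. The only point that deserves to be stated carefully is that $k$-unirationality, unlike $k$-rationality, is insensitive to the obstruction found in Proposition~\ref{prop:Dic12rationality}: that proposition forces the associated quartic del Pezzo surface to be not $k$-rational, yet it still has a $k$-point (the image of $L_{1,0,0}$), and the fixed line on the cubic surface itself is all that is needed to invoke the degree-$3$ unirationality criterion.
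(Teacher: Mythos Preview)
Your proposal is correct and follows essentially the same route as the paper's own proof: reduce via Theorem~\ref{thm:rationality} to the single case of $3\mathsf{C}_2$ with $\epsilon_3\notin k$, then invoke the fixed line $L_{1,0,0}$ from the proof of Proposition~\ref{prop:Dic12rationality} to get a $k$-point and apply the degree-$3$ unirationality criterion. Your added remark that a line defined over $k$ is $k$-isomorphic to $\mathbb{P}^1_k$ (hence has $k$-points) makes explicit a step the paper leaves implicit.
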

\begin{proof}
Suppose $G \subseteq W(\mathsf{E}_6)$ acts on a smooth cubic surface. If the three conditions of Theorem \ref{thm:rationality} are not met, then $G$ acts on a $k$-rational, and thus $k$-unirational, smooth cubic surface over $k$. If the three conditions are satisfied, let $X$ be a cubic surface on which $3\mathsf{C}_2$ acts. In the proof of Proposition \ref{prop:Dic12rationality}, we showed that the line $L_{1,0,0}$ is fixed by the $\Gamma$-action on the lines of $X$, so $X(k) \neq \emptyset$. It follows that $X$ is $k$-unirational.
\end{proof}

\begin{remark}
\label{rem:minimal}
The analogue of Corollary \ref{cor:unirational} for quartic del Pezzo surfaces was proven in \cite{Smi23}. Also notice that if a subgroup $G$ of $W(\mathsf{E}_6)$ acts on a smooth cubic surface over $k$, then we can always find a non-minimal cubic surface on which $G$ acts. This is not the case for group actions on quartic del Pezzo surfaces (see Proposition 6.2 of \cite{Smi23}).
\end{remark}

\bibliographystyle{alpha}
\bibliography{mybibliography}

\end{document}